\title{The Convex Matching Distance in Multiparameter Persistence}
\author{
Francesco Conti
\and
Patrizio Frosini
\and
Ulderico Fugacci
\and
Eloy Mosig Garc\'\i a
\and
Nicola Quercioli
\and
Sara Scaramuccia
\and
Francesca Tombari}
\date{}
\begin{document}

\maketitle

\begin{abstract}
We introduce the convex matching distance, a novel metric for comparing functions with values in the real plane. This metric measures the maximal bottleneck distance between the persistence diagrams associated with the convex combinations of the two function components. Similarly to the traditional matching distance, the convex matching distance aggregates the information provided by two real-valued components. However, whereas the matching distance depends on two parameters, the convex matching distance depends on only one, offering improved computational efficiency. We further show that the convex matching distance can be more discriminative than the traditional matching distance in certain cases, although the two metrics are generally not comparable. Moreover, we prove that the convex matching distance is stable and characterize the coefficients of the convex combination at which it is attained. Finally, we demonstrate that this new aggregation framework benefits from the computational advantages provided by the Pareto grid, a collection of curves in the plane whose points lie in the image of the Pareto critical set associated with functions assuming values on the real plane.
Experimental validation on MNIST digits, synthetic shapes, and chaotic attractors suggests that the convex matching distance provides a reliable and efficient alternative to the matching distance, at a significantly lower computational cost.
\end{abstract}

\subsection*{Keywords}
Biparameter persistent homology, Matching distance, Pareto grid.


\section{Introduction}



In recent years, research on multiparameter persistent homology has grown substantially, driven by the need to apply Topological Data Analysis (TDA) techniques to data represented by multifiltrations, as occurs, for example, when analysing or comparing data described by vector-valued functions \cite{FrMu99,BiCeFrGiLa08,CaZo09,CaDFFe10,CaSiZo10,CaLa11,CaEtFrKaLa13,CeLa13,CaFeLa15,CeFr15,Le15,AlKaLa17,HaOtScTi19}. However, a fully developed theoretical framework enabling efficient applications of TDA in this setting is still lacking.
A widely adopted comparison method in multiparameter persistence is based on the \textit{matching distance}
\cite{d’Amico2010,BiCeFrGi11,CeDFFeFrLa13,KeLeOu20,Landi2022,BjKe23}.
This pseudo-metric is introduced by observing that, given a continuous function $\boldsymbol{\p}:X\to \R^n$
representing the data of interest, we can consider monoparametric filtrations of the topological space $X$ associated with lines in $\R^n$ having direction given by a vector $\boldsymbol{{a}}=({a}_1,\ldots,{a}_n)$ such that $\sum_{i=1}^n a_i=1$ and $a_1,\ldots,a_n > 0$.
Each such line, parametrised as $\boldsymbol{b}+u\boldsymbol{{a}}$ with $\boldsymbol{b}=(b_1,\ldots,b_n)$ and $\sum_{i=1}^n b_i=0$, induces a filtration $\{X^{(\bo a,\bo b)}_u\}_{u\in \R}$ of $X$ defined by the subsets $X^{(\bo a,\bo b)}_u$
consisting of all points in $X$ whose image under $\boldsymbol{\p}$ has coordinates less than or equal to those of $\boldsymbol{b}+u\boldsymbol{{a}}$.
From an analytical viewpoint, this construction is equivalent to considering, for each function $\boldsymbol{\varphi}=(\p_1,\ldots,\p_n):X\to\R^n$,
the sublevel sets of the real-valued function
\begin{equation}
\label{eq:intro}
    {\boldsymbol{\p}}^*_{\boldsymbol{{a}},\boldsymbol{b}}(p):=\min_{1\le i\le n}\left\{{a}_i\right\}\max_{1\le i\le n}\left\{\frac{\p_i(p)-b_i}{{a}_i}\right\}.
\end{equation}
With this premise, the matching distance in degree $k$ between two continuous functions $\boldsymbol{\p},\boldsymbol{\psi}:X\to\R^n$ is defined by setting
$$d_{\mathrm{match},k}(\boldsymbol{\p},\boldsymbol{\psi}):=
\sup_{{\boldsymbol{{a}},\boldsymbol{b}}}
d_B\left(\dgm_k\left({\boldsymbol{\p}}^*_{\boldsymbol{{a}},\boldsymbol{b}}\right),\dgm_k\left({\boldsymbol{\psi}}^*_{\boldsymbol{{a}},\boldsymbol{b}}\right)\right),$$
where $d_B$ is the bottleneck distance and $\dgm_k$ is the operator that computes the persistence diagrams in degree $k$.

In recent years, the extended Pareto Grid has been introduced to compute points in the persistence diagram of $\bo\p^*_{\bo a, \bo b}$, and to characterise at which $(\bo a, \bo b)$ the matching distance is attained \cite{coher_match,EFQT2023,FrMoQuTo25}.

While the study of the matching distance is geometrically interesting and theoretically rich
it involves $2(n-1)$ independent parameters and depends non-smoothly on the function $\boldsymbol{\p}$, leading to
computational challenges and monodromy \cite{coher_match}.
We address this issue by considering the operator that maps a function
\[
\boldsymbol{\varphi} = (\varphi_1, \ldots, \varphi_n) \colon X \to \mathbb{R}^n
\]
to a family of real-valued functions $\boldsymbol{\varphi}^{\boldsymbol{t}}$ defined as follows:
for each $\boldsymbol{t} = (t_1, \ldots, t_n) \in \mathbb{R}^n$ satisfying $\sum_{i=1}^n t_i = 1$ and $t_i \ge 0$ for all $i$, we set
\[
\boldsymbol{\varphi}^{\boldsymbol{t}} := \sum_{i=1}^n t_i \varphi_i.
\]
The conditions on $\boldsymbol{t}$ imply $t_n = 1 - \sum_{i=1}^{n-1} t_i$; hence, the family
$\{\boldsymbol{\varphi}^{\boldsymbol{t}}\}_{\boldsymbol{t} \in \R^n}$ depends on $n-1$ parameters, substantially reducing the complexity of the approach based on the parameters $\boldsymbol{a}$ and $\boldsymbol{b}$.
Moreover, our operator $F_{\boldsymbol{t}}(\bo\p)=\bo\p^{\boldsymbol{t}}$ depends smoothly on the parameters $\boldsymbol{t}$ and the function $\bo\p$, in contrast with the operator in~\eqref{eq:intro}.

Furthermore, the operator $F_{\boldsymbol{t}}$ gives rise to a new pseudo-metric between filtering functions, called the \emph{convex matching distance}, defined as
\[
\cmd_k(\boldsymbol{\varphi}, \boldsymbol{\psi})
:=
\max_{\substack{
    t_1,\ldots,t_n\ge 0\\
    t_1+\ldots+t_n = 1
}}
d_B\!\left(
    \dgm_k(\boldsymbol{\varphi}^{\boldsymbol{t}}),
    \dgm_k(\boldsymbol{\psi}^{\boldsymbol{t}})
\right).
\]
Although this pseudo-metric is not always more discriminative than the classical matching distance (as shown by Examples~\ref{exKC1} and~\ref{exKC2}), its dependence on $n-1$ parameters, together with the fact that it is defined via a smooth procedure, provides significant computational advantages from an applied perspective.

The theoretical framework from which the convex matching distance derives is that of
\emph{group equivariant non-expansive operators (GENEOs)}, which were introduced ten years ago
to restrict the invariance of persistence diagrams in Topological Data Analysis \cite{FrJa16}.
In particular, the convex matching distance originates from the paper \cite{bergomi2019towards},
where it was proved that selecting an arbitrary family of GENEOs leads to the construction of
new stable matching distances (Proposition~9 and Theorem~10; see also \cite{Fr16}). In the present paper, we focus on the family of operators
\(F_{\boldsymbol{t}}(\bo \p)=\bo \p^{\boldsymbol{t}}\),
which yields the convex matching distance, owing to the interesting computational properties of
this metric.
Interestingly, infinitely many other examples of parametric families of GENEOs can be defined. For instance, restricting ourselves to monoparametric families, we may consider the following operators:
\[
\hat{F}_t(\bo \p)
:= (1-t)\max(\p_1, \p_2) + t\min(\p_1, \p_2),
\quad t \in [0,1],
\]
and
\[
\tilde{F}_t(\bo \p)
:=
\left(\tfrac{1}{2}|\p_1|^t + \tfrac{1}{2}|\p_2|^t\right)^{\!\frac{1}{t}},
\quad t \ge 1.
\]

Each of these operators could be used to define new pseudo-metrics in TDA.
We emphasize that the theory of GENEOs also has interesting applications beyond classical Topological Data Analysis
(cf., e.g., \cite{bergomi2019towards,Mi23,FaFeFr2023,BoBoBrFrQu23,BocchiFMPPGGLBF24,
CoBoGiGiPeFr25,Felipe2025,BoFeFr25,BoFrMi25,LaMiBoFrSo25,Ah26}).

For alternative approaches to this idea of a unifying theory to generate distances in persistence, we refer to~\cite{BePe26} where projected distances between multiparameter persistence modules are introduced in the context of sheaf theoretic persistence.
The convex matching distance can also be framed in this context.

The approach described in this paper is related to, but distinct from, the \emph{persistent homology transform} (PHT) \cite{TuMuBo14}.
The PHT is a topological transform that takes as input a subset of a Euclidean space and associates to every unit vector the collection of persistence diagrams of the height functions on that subset in the direction given by the vector.
A distance between two subsets is then defined by integrating over the sphere the distances between the respective persistence diagrams.
The convex matching distance, instead, is obtained as a maximum over the unit vectors of non-negative slope, thus, over a smaller set.
Considering only vectors of positive slope is a crucial aspect of our techniques because, together with some regularity assumptions, it enables the use of a convenient differential geometry tool, called the \emph{Pareto grid}.
Another key difference between the two approaches lies in their scope.
While the PHT is designed to recover the geometry of the object $X$, our method, based on convex combinations of a vector-valued function, focuses on making inferences about the function whose domain is $X$.

In this paper, we study the convex matching distance in the case $n = 2$.
We prove that the convex matching distance is stable with respect to the uniform norm $\|\boldsymbol{\varphi} - \boldsymbol{\psi}\|_\infty$ (Proposition~\ref{pro:stable}) and establish a \emph{Position Theorem} (Theorem~\ref{thm:position}), analogous to the one already known for the matching distance~\cite{coher_match,FrMoQuTo25}.
This theorem allows one to recover, from the Pareto grid of $\boldsymbol{\varphi}$, the coordinates of the points in the persistence diagram of $\boldsymbol{\varphi}^{t} := (1-t)\p_1+t\p_2$, for every $t \in [0,1]$.
Finally, in Theorem~\ref{thm:main}, we show that the convex matching distance coincides with the maximal bottleneck distance between the persistence diagrams of the functions $\boldsymbol{\varphi}^{t}$ and $\boldsymbol{\psi}^{t}$ for $t$ belonging to a suitable set of special values,
thus paving the way for an efficient method to compute the convex matching distance.
We validate the practical effectiveness of the convex matching distance through experiments on three benchmark datasets: MNIST digit classification, synthetic 3D shapes under varying noise levels, and chaotic dynamical attractors.
Our results highlight that the convex matching distance achieves comparable classification performance to the matching distance computed on a fine $11 \times 11$ parameter grid, while requiring only $11$ persistence diagram computations instead of $121$.
Moreover, we observe that the convex matching distance can outperform coarser approximations of the matching distance, confirming that the convex parametrization captures the essential discriminative information of the bifiltration.
These findings establish the convex matching distance as a computationally efficient and theoretically grounded alternative to existing multiparameter persistence metrics.

\section{Preliminaries}\label{sec:setting}

In this section, we recall the bottleneck distance in monoparameter persistence and the matching distance in biparameter persistence.
To do that, some preliminary notions about persistence theory is needed, and certain hypothesis need to be assumed.
However, as this is fairly standard, we will not delve into details, and refer the reader to~\cite[Section 2 and Appendix A]{FrMoQuTo25} for a compact treatment of preliminaries.
The following is a complete list of symbols used in this article
\footnote{Note that these symbols might differ slightly from those used in \cite{FrMoQuTo25}.}:

\begin{itemize}
\item $X$ is a compact topological space;
\item $\Delta:=\{(u, u)\mid u\in \R\}$, $\Delta^+:=\{(u,v)\mid u,v\in \R, u<v\}$, $\bar \Delta^*:=\Delta^+\cup \{(u,\infty)\mid u\in \R\}\cup \{\Delta\}$;
\item $\dgm_k(\p)$ is the $k$-th persistence diagram of the continuous function $\p\colon X\to \R$, and it is a multiset in $\bar\Delta^*$, where each of its points has a multiplicity, and, in particular, $\Delta$ is a point in $\dgm_k(\p)$ with infinite multiplicity;
\item the following function $d\colon \bar\Delta^*\times \bar \Delta^*\to [0,\infty]$ is an extended metric
\[d(p,q) := \begin{cases}
        C(u,u',v,v') \quad &\text{if } {p}=(u,v), {q}=(u',v') \in \Delta^+,\\
        \vert u-u'\vert &\text{if } {p}=(u,\infty), {q}=(u',\infty),\\
        \frac{v-u}{2} &\text{if } {p}=(u,v) \in \Delta^+, =\Delta,\\
        \frac{v'-u'}{2} &\text{if } {p} = \Delta, {q}=(u',v') \in \Delta^+,\\
        0 &\text{if } {p} = {q} =\Delta,\\
        \infty &\text{otherwise,}
    \end{cases}\]
    where $C(u,u',v,v') := \min\{\max\{\vert u-u'\vert,\vert v-v'\vert\},\max\{\frac{v-u}{2},\frac{v'-u'}{2}\}\}$;
\item if $\boldsymbol{\p}=(\p_1,\p_2)\colon X\to \R^2$ continuous function, $\boldsymbol{\p}^*_{\boldsymbol{a},\boldsymbol{b}}:=\min \{a, 1-a\}\max \{\frac{\p_1-b}{a}, \frac{\p_2+b}{1-a}\}$, with $\boldsymbol{a}:=(a,1-a)$, $\boldsymbol{b}:=(b,-b)$, and $a\in ]0,1[$, $b\in \R$.
\end{itemize}

Let $\p, \psi\colon X\to \R$ be continuous functions.
The $k$-bottleneck distance \cite{Cohen-Steiner2007} is a metric between persistence diagrams defined by
\[
d_{B, k}(\p, \psi):=d_B(\dgm_k(\p), \dgm_k(\psi))=\max _{\sigma} \max_{p\in \dgm_k(\p)} d(p, \sigma(p)),
\]
where $\sigma$ ranges over the multiset bijections between $\dgm_k(\p)$ and $\dgm_k(\psi)$.

Let
$\boldsymbol{\p},\boldsymbol{\psi}\colon X\to \R^2$ be continuous functions.
The $k$-th matching distance~\cite{CeDFFeFrLa13} is a metric between the persistence Betti number functions (rank invariants) defined by
\[
d_{\mathrm{match}, k}(\boldsymbol{\p}, \boldsymbol{\psi}):=\sup_{\boldsymbol{a}, \boldsymbol{b}} d_{B, k}(\boldsymbol{\p}^*_{\boldsymbol{a}, \boldsymbol{b}}, \boldsymbol{\psi}^*_{\boldsymbol{a}, \boldsymbol{b}}).
\]

\begin{remark}
\label{rem:asdkfjhaskjdfn}
Because of the definition of $d$, the $k$-bottleneck distance between persistence diagrams $\dgm_k(\p)$ and $\dgm_k(\psi)$ is of the form $c\lvert w_0- w_1\rvert$, where $c\in \{\frac{1}{2}, 1\}$, and $w_0, w_1$ are coordinates of the points of $\dgm_k(\p)\sqcup \dgm_k(\psi)$.
\end{remark}

\section{Convex matching distance}

Let us consider a continuous function $\bo \p=(\p_1,\p_2)\colon X\to\R^2$.
    For every $t\in[0,1]$, we set $\bo \p^t:=(1-t) \p_1+t \p_2$.
\begin{definition}
Considering $\bo \p,\bo \s \colon X \to \mathbb{R}^2$ and an integer $k$, we define the \textit{convex matching distance} $\cmd_k$ between $\bo \p$ and $\bo \s$ as
\begin{equation}\label{def:smd}
\cmd_k(\bo \p,\bo \s):= \max_{t\in[0,1]}d_{B,k} \left(\bo \p^t,\bo \s^t\right)=\max_{t\in[0,1]}d_{B}\left(\dgm_k(\bo \p^t),\dgm_k(\bo \s^t)\right).
\end{equation}
\end{definition}


\begin{remark}\label{rmk:well-defined}
We now show that (\ref{def:smd}) is well defined and that the maximum actually exists.
If the function $\boldsymbol{\varphi} = (\varphi_1,\varphi_2)$ is fixed, then for any $t_1,t_2 \in [0,1]$ we have
\begin{align*}
        \lVert \boldsymbol{\varphi}^{t_1} - \boldsymbol{\varphi}^{t_2} \rVert_\infty
        & = \big\lVert (1-t_1)\varphi_1 + t_1 \varphi_2 - \big((1-t_2)\varphi_1 + t_2 \varphi_2\big)\big\rVert_\infty \\
        & = \big\lVert \varphi_1 - t_1 \varphi_1 + t_1 \varphi_2 - \varphi_1 + t_2 \varphi_1 - t_2 \varphi_2 \big\rVert_\infty \\
        & = \lVert (t_1 - t_2)(\varphi_2 - \varphi_1) \rVert_\infty \\
        & = |t_1 - t_2| \,\lVert \varphi_1 - \varphi_2 \rVert_\infty .
\end{align*}
It follows that the map $t \mapsto \boldsymbol{\varphi}^t$ is $\lVert \varphi_1 - \varphi_2 \rVert_\infty$-Lipschitz (see also~\cite[Proposition 7.9]{BePe26}).
From the stability of the matching distance~\cite{CeDFFeFrLa13}—which implies
\(d_{B,k}(\boldsymbol{\varphi}^{t_1},\boldsymbol{\varphi}^{t_2})\le \lVert \boldsymbol{\varphi}^{t_1}-\boldsymbol{\varphi}^{t_2}\rVert_\infty\) and
\(d_{B,k}(\boldsymbol{\psi}^{t_1},\boldsymbol{\psi}^{t_2})\le \lVert \boldsymbol{\psi}^{t_1}-\boldsymbol{\psi}^{t_2}\rVert_\infty\)—and
by applying the
triangle inequality, we get
\begin{align*}
\Bigl|d_{B,k}\bigl(\boldsymbol{\varphi}^{t_1},\boldsymbol{\psi}^{t_1}\bigr)-
d_{B,k}\bigl(\boldsymbol{\varphi}^{t_2},\boldsymbol{\psi}^{t_2}\bigr)\Bigr|
\;& \le d_{B,k}\bigl(\boldsymbol{\varphi}^{t_1},\boldsymbol{\varphi}^{t_2}\bigr)+
d_{B,k}\bigl(\boldsymbol{\psi}^{t_1},\boldsymbol{\psi}^{t_2}\bigr)\\
 & \le  \lVert \boldsymbol{\p}^{t_1}-\boldsymbol{\p}^{t_2}\rVert_\infty + \lVert \boldsymbol{\psi}^{t_1}-\boldsymbol{\psi}^{t_2}\rVert_\infty\\
 & \le |t_1 - t_2| \,\lVert \varphi_1 - \varphi_2 \rVert_\infty + |t_1 - t_2| \,\lVert \psi_1 - \psi_2 \rVert_\infty.
\end{align*}
It follows that the map
\(t\mapsto d_{B,k}(\boldsymbol{\varphi}^{t},\boldsymbol{\psi}^{t})\) is continuous on \([0,1]\), as claimed.

We also observe that the map
\(\boldsymbol{\varphi}\mapsto\boldsymbol{\varphi}^t\) is \(1\)-Lipschitz for every \(t\in[0,1]\), since
\begin{align*}
        \lVert \boldsymbol{\varphi}^{t} - \boldsymbol{\psi}^{t} \rVert_\infty
        & = \big\lVert (1-t) \varphi_1+t \varphi_2 - \big((1-t) \psi_1+t \psi_2\big)\big\rVert_\infty \\
        & = \big\lVert (1-t) (\varphi_1-\psi_1)+t (\varphi_2 -\psi_2) \big\rVert_\infty \\
        & \le (1-t) \|\varphi_1-\psi_1\|_\infty+t \|\varphi_2 -\psi_2\|_\infty \\
        & \le (1-t) \|\boldsymbol{\varphi}-\boldsymbol{\psi}\|_\infty+t \|\boldsymbol{\varphi} -\boldsymbol{\psi}\|_\infty \\
        & = \|\boldsymbol{\varphi}-\boldsymbol{\psi}\|_\infty.
\end{align*}

\end{remark}

\begin{remark}\label{rmk:smoothness}
If $\bo\p$ is $k$-times differentiable, with $k$ possibly infinite, then so it is the function $\bo\p^t$ for each $t \in [0,1]$, due to being defined as the composition of $\bo\p=(\p_1,\p_2)$ with the smooth function $(s_1,s_2)\mapsto (1-t)s_1+ts_2$.
This is a key difference between the convex matching distance and the classical matching distance, for the operator sending $\bo\p$ to $\boldsymbol{\p}^*_{\boldsymbol{a},\boldsymbol{b}}$ preserves continuity only. 

Moreover, the function $t \mapsto \boldsymbol{\p}^t$ defined on the interval $[0,1]$ is smooth (i.e., infinitely differentiable) on $[0,1]$ (where for the endpoints we mean that the function has continuous left-sided or right-sided derivatives of all required orders). The claim has been already proven for the order 0 in Remark \ref{rmk:well-defined}.
For the other orders, it is enough to notice that, independently from the chosen $t$, the first derivative of the considered function is $-\p_1 + \p_2$  while, for higher orders, the derivative is
the null function.

\end{remark}
\begin{remark}\label{spieganome}
We call $\cmd_k$ the \emph{convex matching distance} because it is based on the convex combination of the components of $\bo \p$ and $\bo \s$.
This idea generalises to the case of functions with values in $\R^n$: given the function $\bo \p = (\p_1, \ldots, \p_n)$, we consider the map assigning to each vector $(t_1, \ldots, t_n) \in \R^n$ with $t_i \ge 0$ and $\sum_{i=1}^n t_i = 1$ the function $\bo \p^t := \sum_{i=1}^n t_i \p_i$.
\end{remark}

\begin{proposition}
    $\cmd_k$ is a pseudo-metric.
\end{proposition}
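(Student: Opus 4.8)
The statement asserts that $\cmd_k$ is a pseudo-metric, so I must verify three properties: (i) $\cmd_k(\bo\p,\bo\p)=0$ and $\cmd_k(\bo\p,\bo\s)\ge 0$; (ii) symmetry, $\cmd_k(\bo\p,\bo\s)=\cmd_k(\bo\s,\bo\p)$; and (iii) the triangle inequality, $\cmd_k(\bo\p,\bo\chi)\le\cmd_k(\bo\p,\bo\s)+\cmd_k(\bo\s,\bo\chi)$. (It is only a \emph{pseudo}-metric, not a metric, so I do not — and cannot — prove that $\cmd_k(\bo\p,\bo\s)=0$ implies $\bo\p=\bo\s$; indeed, two functions with the same persistence diagrams along every convex line need not coincide.) The main point I rely on is that $\cmd_k$ is defined as a maximum over $t\in[0,1]$ of $d_{B,k}(\bo\p^t,\bo\s^t)$, and that this maximum is attained (Remark~\ref{rmk:well-defined} guarantees the map $t\mapsto d_{B,k}(\bo\p^t,\bo\s^t)$ is continuous on the compact set $[0,1]$).

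For non-negativity and vanishing on the diagonal: for each fixed $t$, $d_{B,k}$ is a metric between persistence diagrams, hence $d_{B,k}(\bo\p^t,\bo\p^t)=0$ and $d_{B,k}(\bo\p^t,\bo\s^t)\ge 0$; taking the maximum over $t\in[0,1]$ preserves both facts. Symmetry is immediate from the symmetry of $d_{B,k}$: $d_{B,k}(\bo\p^t,\bo\s^t)=d_{B,k}(\bo\s^t,\bo\p^t)$ for every $t$, so the maxima agree. Both of these steps are routine.

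The triangle inequality is the only step requiring a small argument, and it is the place where one must be careful about the order of maximum and inequality — but it goes through by the standard ``sup of triangle inequalities'' trick. Let $t^\ast\in[0,1]$ be a value at which the maximum defining $\cmd_k(\bo\p,\bo\chi)$ is attained, so that $\cmd_k(\bo\p,\bo\chi)=d_{B,k}(\bo\p^{t^\ast},\bo\chi^{t^\ast})$. Since $d_{B,k}$ is a metric on persistence diagrams, applying the triangle inequality for $d_B$ to the three diagrams $\dgm_k(\bo\p^{t^\ast})$, $\dgm_k(\bo\s^{t^\ast})$, $\dgm_k(\bo\chi^{t^\ast})$ yields
\[
d_{B,k}(\bo\p^{t^\ast},\bo\chi^{t^\ast})\le d_{B,k}(\bo\p^{t^\ast},\bo\s^{t^\ast})+d_{B,k}(\bo\s^{t^\ast},\bo\chi^{t^\ast}).
\]
Each summand on the right is bounded above by the corresponding maximum over $t\in[0,1]$: $d_{B,k}(\bo\p^{t^\ast},\bo\s^{t^\ast})\le\cmd_k(\bo\p,\bo\s)$ and $d_{B,k}(\bo\s^{t^\ast},\bo\chi^{t^\ast})\le\cmd_k(\bo\s,\bo\chi)$. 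Combining these gives $\cmd_k(\bo\p,\bo\chi)\le\cmd_k(\bo\p,\bo\s)+\cmd_k(\bo\s,\bo\chi)$, which is the desired inequality. I do not anticipate any real obstacle here; the only thing to watch is that one picks the maximising $t$ for the left-hand side and then estimates each right-hand term at \emph{that same} $t$, rather than the other way around. This completes the verification that $\cmd_k$ is a pseudo-metric.
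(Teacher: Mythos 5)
Your proof is correct and follows essentially the same approach as the paper's: non-negativity, symmetry, and vanishing on the diagonal are inherited from $d_{B,k}$, and the triangle inequality comes from the standard pointwise-then-maximize argument. The paper phrases the last step as $\max_t (A_t+B_t)\le \max_t A_t + \max_t B_t$, while you pick the maximizer $t^\ast$ for the left side first; these are the same argument.
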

\begin{proof}
    It can be easily seen that $\cmd_k$ is a non-negative function, for which the symmetry property and the fact that, for any $\bo \p$, $\cmd_k(\bo \p, \bo \p) = 0$ are directly inherited from the bottleneck distance. Thus, to complete the proof, it remains to show that the triangle inequality holds.
    Since $d_{B,k}$ is a pseudo-metric, for any $\bo \p, \bo \s$ and $\boldsymbol{\xi}$ we have that
    \begin{align*}
        \cmd_k(\bo \p,\bo \s)&= \max_{t\in[0,1]}d_{B,k}(\bo \p^t,\bo \s^t)
        \\&\le \max_{t\in[0,1]}(d_{B,k}(\bo \p^t,\boldsymbol{\xi}^t)+ d_{B,k}(\boldsymbol{\xi}^t,\bo \s^t))
    \\&\le \max_{t\in[0,1]}d_{B,k}(\bo \p^t,\boldsymbol{\xi}^t)+ \max_{t\in[0,1]}d_{B,k}(\boldsymbol{\xi}^t,\bo \s^t)
    \\&= \cmd_k(\bo \p,\boldsymbol{\xi})+\cmd_k(\boldsymbol{\xi},\bo \s).
    \end{align*}
\end{proof}
\begin{remark}
We claim that in general $\cmd_k$ is not a metric. Since there exist two real-valued functions $f,g$ such that $f \ne g$ and $d_B(\dgm_k(f),\dgm_k(g))=0$, we can consider $\bo \p=(f,f)$ and $\bo \s=(g,g)$. Thus, $\bo \p^t=f$ and $\bo \s^t=g$ for any $t$. Then we obtain that $\cmd_k(\bo \p,\bo \s) =d_B(\dgm_k(f),\dgm_k(g))=0$, but $\bo \p \ne \bo \s$.
\end{remark}


\begin{proposition}[Stability of $\cmd_k$]\label{pro:stable}
For any $k$, $\cmd_k(\bo \p,\bo \s)\le\|\bo \p-\bo \s\|_\infty$.
\end{proposition}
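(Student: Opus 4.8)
The plan is to reduce the statement to two facts that are already available in the excerpt: the classical stability of the bottleneck distance for real-valued functions, and the $1$-Lipschitz estimate $\lVert\bo\p^t-\bo\s^t\rVert_\infty\le\lVert\bo\p-\bo\s\rVert_\infty$ established at the end of Remark~\ref{rmk:well-defined}. First I would fix an arbitrary $t\in[0,1]$ and recall that $\bo\p^t=(1-t)\p_1+t\p_2$ and $\bo\s^t=(1-t)\s_1+t\s_2$ are continuous real-valued functions on $X$, so the stability theorem for $1$-parameter persistence applies and yields $d_{B,k}(\bo\p^t,\bo\s^t)=d_B\bigl(\dgm_k(\bo\p^t),\dgm_k(\bo\s^t)\bigr)\le\lVert\bo\p^t-\bo\s^t\rVert_\infty$.

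Next I would chain this with the $1$-Lipschitz bound: combining the two inequalities gives $d_{B,k}(\bo\p^t,\bo\s^t)\le\lVert\bo\p^t-\bo\s^t\rVert_\infty\le\lVert\bo\p-\bo\s\rVert_\infty$ for every $t\in[0,1]$. Since the right-hand side does not depend on $t$, taking the maximum over $t\in[0,1]$ on the left — which exists by the continuity argument in Remark~\ref{rmk:well-defined} — preserves the inequality, so $\cmd_k(\bo\p,\bo\s)=\max_{t\in[0,1]}d_{B,k}(\bo\p^t,\bo\s^t)\le\lVert\bo\p-\bo\s\rVert_\infty$, which is exactly the claim.

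There is essentially no obstacle here: the proof is a two-line consequence of the $n=1$ stability theorem together with the Lipschitz computation already carried out. The only point that deserves a word is that the convex-combination operator does not increase the uniform distance, and that is precisely the last display of Remark~\ref{rmk:well-defined}, so I would simply cite it rather than redo the estimate. If one wanted to be slightly more self-contained, one could also remark that the same conclusion follows from the observation that $F_t$ is a $1$-Lipschitz (indeed non-expansive) operator, which frames the result in the GENEO language mentioned in the introduction.
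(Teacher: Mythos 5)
Your proof is correct and follows exactly the same route as the paper: apply the $1$-parameter stability theorem to get $d_{B,k}(\bo\p^t,\bo\s^t)\le\lVert\bo\p^t-\bo\s^t\rVert_\infty$, then invoke the $1$-Lipschitz estimate from Remark~\ref{rmk:well-defined}, and finally take the maximum over $t$. No substantive difference from the paper's argument.
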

\begin{proof}
For any $t\in [0,1]$, by the stability of the bottleneck distance~\cite{Cohen-Steiner2007} and Remark~\ref{rmk:well-defined},
\[
d_{B}\left(\dgm_k(\bo \p^t),\dgm_k(\bo \s^t)\right) \le\|\bo \p^t-\bo \s^t\|_\infty \le \|\bo \p-\bo \s\|_\infty.
\]
\end{proof}



\section{Comparison with the matching distance}

In this section, we compare the newly introduced convex matching distance with the classical matching distance \cite{CeDFFeFrLa13}.

In Example \ref{exKC1}, we exhibit two functions for which the convex matching distance is strictly greater than the classical matching distance, thereby showing that, at least in some cases, the new distance is more discriminative in distinguishing data.

Vice versa, Example \ref{exKC2} provides functions for which the convex matching distance is strictly smaller than the classical matching distance.

\begin{example}\label{exKC1}
Let us consider the circumference $S$ in $\R^3$ of
radius $\sqrt{2}$ and center $(0,0,0)$, whose points belong to the plane of equation $x+z=0$. Then, let us consider the circle $C$ bounded by $S$ and the conical set $K$ obtained by taking the union of the segments joining the point $(1,0,1)$ to each point of $S$ (see Figure~\ref{cono}).
\begin{figure}
\centering
\includegraphics[width=0.5\linewidth]{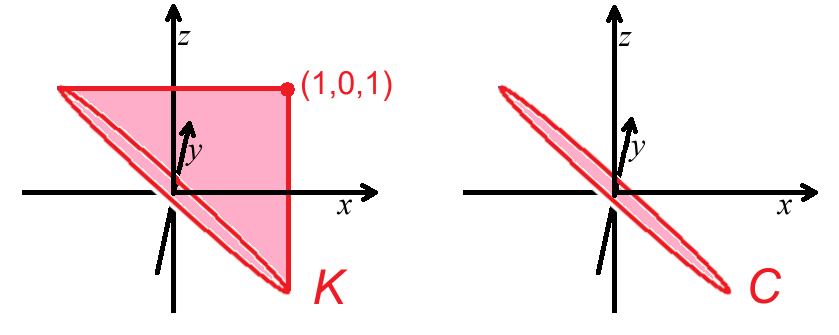}
\caption{The sets $K$ and $C$ cited in the Examples \ref{exKC1}, \ref{exKC2}.}
\label{cono}
\end{figure}
We set $\boldsymbol{\p}(x,y,z):=(x,z)$ for $(x,y,z)\in K$ and $\boldsymbol{\psi}(x,y,z):=(x,z)$ for $(x,y,z)\in C$.
Given that $K$ and $C$ are homeomorphic to the closed hemisphere $\Sigma:=\left\{(x,y,z)\in\R^3: x^2+y^2+z^2=1, x+z\ge 0\right\}$,
we can find two homeomorphisms $\bo h_K:\Sigma\to K$, $\bo h_C:\Sigma\to C$. Therefore, we can consider the continuous functions
$\hat{\bo \p}:\Sigma\to\R^2$, $\hat{\bo \p}:=\bo \p\circ \bo h_K$, $\hat{\bo \psi}:\Sigma\to\R^2$, $\hat{\bo \psi}:=\bo \psi\circ \bo h_C$.
Let us now compute the following persistence diagrams in degree $1$:
$$
\dgm_1\left(\hat{\boldsymbol{\p}}^*_{\boldsymbol{a},\boldsymbol{b}}\right)=\dgm_1\left({\boldsymbol{\p}}^*_{\boldsymbol{a},\boldsymbol{b}}\right)=\left\{\Delta\right\}=\dgm_1\left({\boldsymbol{\psi}}^*_{\boldsymbol{a},\boldsymbol{b}}\right)=\dgm_1\left(\hat{\boldsymbol{\psi}}^*_{\boldsymbol{a},\boldsymbol{b}}\right)\mathrm{\ for\ any\ }{\boldsymbol{a},\boldsymbol{b}};$$
$$ \dgm_1\left(\hat{\boldsymbol{\p}}^{\frac{1}{2}}\right)=\dgm_1\left({\boldsymbol{\p}}^{\frac{1}{2}}\right)=\left\{\Delta, (0,1)\right\} \mathrm{\ and\ }\dgm_1\left(\hat{\boldsymbol{\psi}}^{\frac{1}{2}}\right)=\dgm_1\left({\boldsymbol{\psi}}^{\frac{1}{2}}\right)=\left\{\Delta\right\}.$$
    Therefore, $d_{\mathrm{match},1}(\hat{\boldsymbol{\p}},\hat{\boldsymbol{\psi}})=0$ and $d_B\left(\hat{\boldsymbol{\p}}^{\frac{1}{2}},\hat{\boldsymbol{\psi}}^{\frac{1}{2}}\right)=\frac{1}{2}$. Hence $\cmd_1(\hat{\boldsymbol{\p}},\hat{\boldsymbol{\psi}})\ge \frac{1}{2}>0=d_{\mathrm{match},1}(\hat{\boldsymbol{\p}},\hat{\boldsymbol{\psi}})$.
\end{example}

However, the convex matching distance is not always greater than the classical matching distance, as shown by the following example.

\begin{example}\label{exKC2}
If we consider degree $0$ instead of degree $1$, the functions given in Example \ref{exKC1} show a case in which the convex matching distance is strictly smaller than the classical matching distance.
In fact, it is easy to verify that

    $$\dgm_0\left(\hat{\boldsymbol{\p}}^*_{(\frac{1}{2},\frac{1}{2}),(0,0)}\right)=
    \dgm_0\left({\boldsymbol{\p}}^*_{(\frac{1}{2},\frac{1}{2}),(0,0)}\right)=\left\{\Delta,(0,1),(0,\infty)\right\};$$
    $$\dgm_0\left(\hat{\boldsymbol{\psi}}^*_{(\frac{1}{2},\frac{1}{2}),(0,0)}\right)=
    \dgm_0\left({\boldsymbol{\psi}}^*_{(\frac{1}{2},\frac{1}{2}),(0,0)}\right)=\left\{\Delta,(0,\infty)\right\};$$
    $$ \dgm_0\left(\hat{\boldsymbol{\p}}^{t}\right)=\dgm_0\left({\boldsymbol{\p}}^{t}\right)=\left\{\Delta, (1-2t,\infty)\right\} =\dgm_0\left({\boldsymbol{\psi}}^{t}\right)=\dgm_0\left(\hat{\boldsymbol{\psi}}^{t}\right) \mathrm{\ for\ any\ } t\in [0,1].$$
    Therefore, $d_{\mathrm{match},0}(\hat{\boldsymbol{\p}},\hat{\boldsymbol{\psi}})\ge\frac{1}{2}$ and $d_B\left(\dgm_0({\boldsymbol{\p}}^{t}),\dgm_0({\boldsymbol{\psi}}^{t})\right)=0$ for any $t\in[0,1]$.
    Hence, $\cmd_0(\hat{\boldsymbol{\p}},\hat{\boldsymbol{\psi}})=0<\frac{1}{2}\le d_{\mathrm{match},0}(\hat{\boldsymbol{\p}},\hat{\boldsymbol{\psi}})$.
\end{example}

\begin{remark}\label{clarifyingrem}
The above examples can be suitably modified to satisfy more restrictive conditions on the smoothness of the involved functions or on their domains, taken to be manifolds.
This shows that their existence is not due to a lack of regularity properties but rather follows from the very definition of the two distances.
By identifying the boundaries of two disjoint copies of $K$ and $C$, respectively, the above examples can be adapted,
thus obtaining two smooth functions
$\tilde{\bo \p},\tilde{\bo \psi}\colon\mathbb S^2\to\R^2$
such that:
\begin{itemize}
    \item $d_{\mathrm{match},1}(\tilde{\boldsymbol{\p}},\tilde{\boldsymbol{\psi}})=0<\frac{1}{2}\le\cmd_1(\tilde{\boldsymbol{\p}},\tilde{\boldsymbol{\psi}})$;
    \item $\cmd_0(\tilde{\boldsymbol{\p}},\tilde{\boldsymbol{\psi}})=0<\frac{1}{2}\le d_{\mathrm{match},0}(\tilde{\boldsymbol{\p}},\tilde{\boldsymbol{\psi}})$.
\end{itemize}
\end{remark}

\section{Pareto grid and Position Theorem}\label{sec:position}


Let $M$ be a closed (i.e., compact and without boundary) smooth Riemannian manifold of dimension $r\ge2$ and $\bo\p=(\p_1,\p_2)\colon M\to \mathbb{R}^2$ a smooth function.

\begin{definition}\label{def:prop_functions}
\begin{enumerate}
    \item The \emph{Jacobi set}, $\mathbb{J}(\bo{\p})$, is the set of all points $x\in M$ at which the gradients of $\p_1$ and $\p_2$ are linearly dependent, namely
    $x\in\mathbb{J}(\bo{\p})$ if and only if there exists $(\lambda_1, \lambda_2)\neq (0,0)$ such that $\lambda_1\nabla\p_1(x) + \lambda_2\nabla\p_2(x) = 0$.
    \item The \emph{Pareto critical set} of $\bo{\p}$, $\mathbb{J}_P(\bo{\p})$, is the set of all points $x \in \mathbb{J}(\bo{\p})$ such that there exists
    $(\lambda_1, \lambda_2)\neq (0,0)$ with $\lambda_1,\lambda_2\geq 0$ such that $\lambda_1\nabla\p_1(x) + \lambda_2\nabla\p_2(x) = 0$.
    The elements of $\mathbb{J}_P(\bo{\p})$ are called \emph{Pareto critical points}.
    Note that $\mathbb{J}_P(\bo{\p})$ contains both the critical points of $\p_1$ and those of $\p_2$.
\end{enumerate}
\end{definition}

Unless otherwise specified, throughout the rest of the article the smooth functions considered will satisfy the following conditions.
\begin{ass}\label{ass:pareto}
\
\begin{enumerate}
    \item No point $x$ exists in $M$ at which both $\nabla \p_1(x)$ and $\nabla \p_2(x)$ vanish.
    \item $\mathbb{J}(\bo\p)$ is a 1-dimensional manifold smoothly embedded in $M$, consisting of finitely many components, each diffeomorphic to a circle.
    \item $\mathbb{J}_P(\bo\p)$ is a 1-dimensional closed submanifold of $\mathbb{J}(\bo\p)$ with boundary.
    \item
    The connected components of $\mathbb{J}_P(\bo\p)\setminus\mathbb{J}_C(\bo\p)$ are finite in number, each being diffeomorphic to an interval,
    where
    $\mathbb{J}_C(\bo\p)$ is
    the set of cusp points of $\bo \p$, i.e.,
    the set of points of $\mathbb{J}(\bo\p)$ at which the restriction of $\bo\p$ to $\mathbb{J}(\bo\p)$ fails to be an immersion. 
    In other words, $\mathbb{J}_C(\bo\p)$ is the subset of $\mathbb{J}(\bo\p)$ where both $\nabla \p_1$ and $\nabla \p_2$ are orthogonal to $\mathbb{J}(\bo\p)$.
    With respect to any parameterisation of each component, one of $\p_1$ and $\p_2$ is strictly increasing and the other is strictly decreasing. Each component may meet critical points of $\p_1$ or $\p_2$ only at its endpoints.
\end{enumerate}
\end{ass}


We call the closures of the images of the connected components of $\mathbb{J}_P(\bo\p) \setminus \mathbb{J}_C(\bo\p)$ (proper) \emph{contours} \cite{coher_match}, and denote by $\mathrm{Ctr}(\bo\p)$ the set of contours of $\p$.
Given two smooth functions $\bo\p, \bo \s \colon M\to \mathbb{R}^2$, we set
$\mathrm{Ctr}(\bo \p,\bo \s):=\mathrm{Ctr}(\bo\p)\cup \mathrm{Ctr}(\bo\s)$.
It is important to observe that Assumption \ref{ass:pareto} is generic within the class of smooth functions from $M$ to $\R^2$ \cite{wan}.

In our setting, we make one further technical assumption.
\begin{ass}\label{ass:nocusps}
\
\begin{enumerate}
    \item The set $\mathbb J_C(\bo\p)$ of cusp points is empty.
    \item Each contour $\bo\alpha \in\mathrm{Ctr}(\bo\p)$ is a smooth regular curve in $\mathcal C^\infty([0,1];\R^2)$.
\end{enumerate}
\end{ass}




\begin{definition}
The \emph{Pareto grid} of $\bo \p$ is the subset of $\R^2$ defined as the image $\bo\p(\mathbb{J}_P(\bo\p))$.
\end{definition}

Figure~\ref{figura:data} shows the Pareto grid of the projection of a 2-sphere in $\R^2$.

\begin{figure}[h]
\begin{center}
\includegraphics[width=0.3\textwidth]{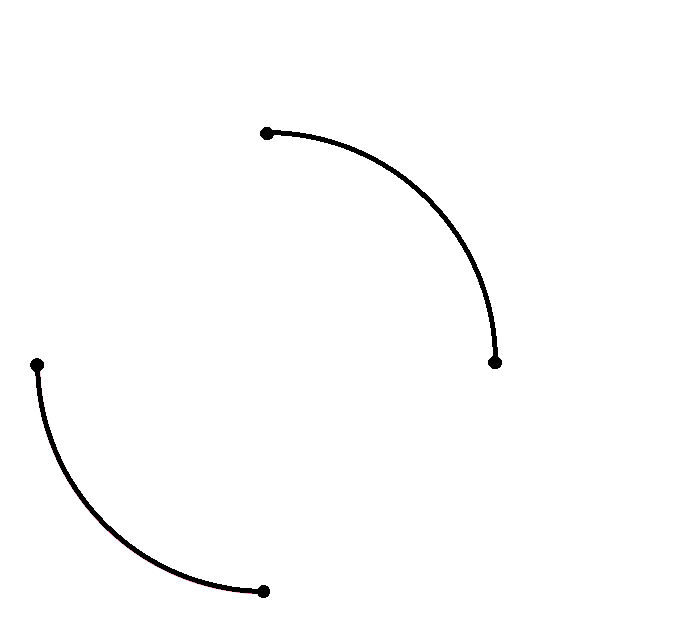}
\end{center}
\caption{The Pareto grid of the function $\bo \p:\mathbb S^2\to\R^2$ defined by setting $\bo \p(x,y,z):=(x,z)$.} 
\label{figura:data}
\end{figure}


Before presenting the main result of this section, we recall the following well-established theorem, which provides a useful relation between the persistence diagram of a smooth function and its critical values.
This statement is a technical but standard fact in the literature, and we include it here without proof
(cf., e.g., Theorem 3.1 in \cite{Simms_1964}, and \cite{Fr96}).

\begin{theorem}\label{thm_coord_cnpts_crit_values}
Let $M$ be a closed smooth manifold and $\p\colon M\to \R$ a smooth function.
If $w$ is a finite coordinate of a point ${p}\in\dgm_k(\p)\setminus \{\Delta\}$, then $w$ is a critical value for $\p$.
\end{theorem}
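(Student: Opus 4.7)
The plan is to argue by contraposition using the classical non-critical interval theorem from Morse theory. Let $M_u := \p^{-1}((-\infty,u])$ denote the sublevel sets, so that the persistence module underlying $\dgm_k(\p)$ is $u \mapsto H_k(M_u)$, with structure maps induced by the inclusions $M_u \hookrightarrow M_v$ for $u \le v$. The strategy is to show that on any open interval of $\R$ that contains no critical value of $\p$, all of these structure maps are isomorphisms; from this, no finite coordinate of a diagram point can fall outside the critical set.

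The first step is to invoke the standard deformation-retract argument: if $[a,b]\subset \R$ contains no critical value of $\p$, then integrating a suitably truncated gradient-like vector field for $\p$ (with respect to any auxiliary Riemannian metric on the compact manifold $M$) produces a strong deformation retraction of $M_b$ onto $M_a$, so the inclusion $M_a\hookrightarrow M_b$ is a homotopy equivalence and induces isomorphisms on $H_k$ in every degree. In particular, the restriction of the persistence module to such an interval consists entirely of isomorphisms.

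The second step is to combine this with the characterisation of persistence diagram coordinates as the endpoints of the intervals in the interval decomposition of the persistence module. Suppose for contradiction that ${p} = (b, d)\in\dgm_k(\p)\setminus\{\Delta\}$ has a finite coordinate $w\in\{b,d\}$ that is a regular value of $\p$. By compactness of $M$ and smoothness of $\p$, one can choose $\varepsilon>0$ small enough that $[w-\varepsilon,w+\varepsilon]$ contains no critical value of $\p$. By the first step, every structure map of the persistence module on this interval is an isomorphism, so the interval decomposition admits no summand with an endpoint equal to $w$. This contradicts $w$ being either the birth $b$ or the death $d$ of the point ${p}$, so $w$ must be a critical value.

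The main technical care lies in the bookkeeping of the two subcases (birth versus death) under whatever left/right-continuity convention is in force for $\dgm_k(\p)$; one must verify that under this convention a birth at $w$ forces a genuine rank jump in $H_k(M_{\cdot})$ or its image at $w$, and similarly that a death at $w$ forces a genuine rank drop in the persistent Betti numbers at $w$. Both phenomena are ruled out by the isomorphism property on $[w-\varepsilon,w+\varepsilon]$, so once the conventions are pinned down the argument closes uniformly in $k$. No further ingredients are needed beyond Morse theory on a closed manifold and the interval-decomposition description of $\dgm_k$.
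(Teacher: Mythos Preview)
The paper does not supply its own proof of this statement: it explicitly labels the result ``a technical but standard fact in the literature'' and defers to the references \cite{Simms_1964} and \cite{Fr96}. Your argument is correct and is precisely the standard one those references encode: the regular-interval theorem (which holds for any smooth function on a closed manifold, Morse or not, since a normalised gradient flow retracts $M_b$ onto $M_a$ whenever $\p^{-1}([a,b])$ contains no critical points) makes the persistence module constant on a neighbourhood of any regular value $w$, and this forbids $w$ from appearing as an endpoint of any bar in the interval decomposition. The closedness of the critical-value set (image of a compact set under $\p$) justifies the existence of such a neighbourhood, and pointwise finite-dimensionality of sublevel homology over a field (from compactness of $M$) licenses the interval decomposition via Crawley--Boevey; with these points noted, the convention bookkeeping you flag is indeed the only remaining detail.
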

We are now ready to state the main result of this section.
Relying on
\cref{thm_coord_cnpts_crit_values},
it provides a geometric characterisation of the points in the persistence diagram of $\bo \p^t$ in terms of the
contours associated with $\bo \p$.
Next theorem is analogous to~\cite[Theorem 2]{coher_match}.
In what follows, the symbol $\cdot$ denotes the dot product.


\begin{theorem}[Position Theorem for $\bo \p^t$]
\label{thm:position}
Let $t \in [0,1]$ and let
$w$ be a finite coordinate of a point in $\dgm_k(\bo \p^t)\setminus \{\Delta\}$.
Then there exist a
contour
denoted as $\boldsymbol{\alpha} = (\alpha_1, \alpha_2) \colon [0,1] \to \mathbb{R}^2$ and a $\bar{\tau} \in [0,1]$ such that:
\begin{enumerate}
\item $\boldsymbol{\alpha}(\bar{\tau}) \cdot (1 - t, t) = w$;
\item $\dfrac{d\boldsymbol{\alpha}}{d\tau}(\bar{\tau}) \cdot (1 - t, t) = 0.$
\end{enumerate}
\end{theorem}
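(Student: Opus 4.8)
The plan is to leverage \cref{thm_coord_cnpts_crit_values}: since $w$ is a finite coordinate of a point in $\dgm_k(\bo\p^t)\setminus\{\Delta\}$, it must be a critical value of the smooth real-valued function $\bo\p^t = (1-t)\p_1 + t\p_2$. So there exists a point $x\in M$ with $\nabla\bo\p^t(x) = (1-t)\nabla\p_1(x) + t\nabla\p_2(x) = 0$ and $\bo\p^t(x) = w$. The first step is therefore to identify this $x$ as a Pareto critical point. Indeed, if $t\in\,]0,1[$, then both coefficients $1-t$ and $t$ are strictly positive and not both zero, so by \cref{def:prop_functions} we immediately get $x\in\mathbb{J}_P(\bo\p)$. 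The boundary cases $t=0$ and $t=1$ need a separate (easy) remark: then $x$ is a critical point of $\p_1$ (resp.\ $\p_2$), and by the last sentence of Assumption~\ref{ass:pareto}(4), critical points of $\p_1$ or $\p_2$ lie on $\mathbb{J}_P(\bo\p)$ as well (they occur as endpoints of contours).

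The second step is to locate $x$ on a contour and produce $\bar\tau$. Since $\mathbb{J}_C(\bo\p) = \emptyset$ by Assumption~\ref{ass:nocusps}(1), the Pareto critical set $\mathbb{J}_P(\bo\p)$ is a finite disjoint union of closed arcs, each of which is (the domain of) a contour $\bo\alpha = \bo\p\circ\gamma$, where $\gamma\colon[0,1]\to\mathbb{J}_P(\bo\p)$ parametrises the component; by Assumption~\ref{ass:nocusps}(2) each such $\bo\alpha$ is a smooth regular curve. Pick the component containing $x$, and let $\bar\tau\in[0,1]$ be the parameter value with $\gamma(\bar\tau) = x$. Then property~(1) is exactly $\bo\alpha(\bar\tau)\cdot(1-t,t) = \p_1(x)(1-t) + \p_2(x)t = \bo\p^t(x) = w$, which is immediate.

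The third and genuinely content-bearing step is property~(2). By the chain rule, $\frac{d\bo\alpha}{d\tau}(\bar\tau)\cdot(1-t,t) = (1-t)\,\nabla\p_1(x)\cdot\gamma'(\bar\tau) + t\,\nabla\p_2(x)\cdot\gamma'(\bar\tau) = \big((1-t)\nabla\p_1(x) + t\nabla\p_2(x)\big)\cdot\gamma'(\bar\tau) = \nabla\bo\p^t(x)\cdot\gamma'(\bar\tau) = 0$, since $\nabla\bo\p^t(x) = 0$. So property~(2) follows directly from $x$ being a critical point of $\bo\p^t$, once we know $\gamma$ is differentiable at $\bar\tau$ — which is guaranteed by the regularity of the contour. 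In the boundary cases $t\in\{0,1\}$ the same computation works verbatim since the relevant gradient still vanishes at $x$.

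I expect the main obstacle to be not step~(3) — which is a short chain-rule computation — but rather the bookkeeping around the \emph{endpoint} situation: making sure that when $w$ is a critical value coming from a critical point of $\p_1$ or $\p_2$ (the cases $t=0,1$, or interior $t$ but $x$ an endpoint of a contour), that point genuinely lies in the image of some contour $\bo\alpha\colon[0,1]\to\R^2$ and that $\bar\tau$ can be taken in the \emph{closed} interval $[0,1]$ (possibly an endpoint), with $\bo\alpha$ still differentiable there in the one-sided sense. This is handled by the closedness of contours (they are closures of images of open arcs of $\mathbb{J}_P(\bo\p)\setminus\mathbb{J}_C(\bo\p)$) together with Assumption~\ref{ass:nocusps}(2), but it is the place where one must be careful. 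A second minor point worth spelling out is that we are invoking \cref{thm_coord_cnpts_crit_values} for $\bo\p^t$, which is legitimate because $\bo\p^t$ is smooth on the closed manifold $M$ by Remark~\ref{rmk:smoothness}.
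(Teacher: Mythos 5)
Your proposal is correct and follows essentially the same route as the paper: invoke \cref{thm_coord_cnpts_crit_values} to get a critical point $\bar x$ of $\bo\p^t$, observe that $\bar x$ is Pareto critical, locate it on a contour via a curve $\bo\gamma$ in $M$ projecting to $\bo\alpha$, and conclude by the chain-rule computation $\nabla\bo\p^t(\bar x)\cdot\bo\gamma'(\bar\tau)=0$. The extra care you take over $t\in\{0,1\}$ is unnecessary (the definition of $\mathbb{J}_P(\bo\p)$ already admits $(\lambda_1,\lambda_2)$ with a zero entry and explicitly includes critical points of $\p_1,\p_2$), but it is not wrong, and the paper likewise relies on Assumption~\ref{ass:nocusps}(2) to guarantee the one-sided differentiability of $\bo\alpha$ at contour endpoints.
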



\begin{proof}
By Theorem~\ref{thm_coord_cnpts_crit_values} we know that $w$ must be a critical value for $\bo \p^t$.
Therefore, there exists a point $\bar x \in M$ such that $w = \bo \p^t(\bar x)$ and $\nabla \bo \p^t(\bar x) = \boldsymbol{0}$, where $\boldsymbol{0}$ denotes the null vector.
Hence, $(1 - t)\nabla \p_1(\bar x) + t\nabla \p_2(\bar x) = \boldsymbol{0}$.
This means that $\bar x$ is a Pareto critical point of $\bo \p$.
Therefore, there exist a
contour
$\boldsymbol{\alpha} = (\alpha_1, \alpha_2) : [0,1] \to \mathbb{R}^2$ and a $\bar \tau \in [0,1]$ such that $\bo \p(\bar x) = \boldsymbol{\alpha}(\bar \tau)$, and
\[
w=\bo \p^t(\bar x)=(1-t) \p_1(\bar x)+t \p_2(\bar x)=\bo \p(\bar x)\cdot(1-t,t)=\boldsymbol{\alpha}(\bar \tau)\cdot(1-t,t).
\]
Consider a smooth regular curve 
$\boldsymbol{\gamma}:
[0,1]
\to M$ such that $\boldsymbol{\alpha}(\tau)=\bo \p\circ \boldsymbol{\gamma}(\tau)$ and $\boldsymbol{\gamma}(\bar \tau)=\bar x$,
which implies that $\boldsymbol{\alpha}(\bar \tau)=(\alpha_1(\bar \tau),\alpha_2(\bar \tau))=(\p_1(\boldsymbol{\gamma}(\bar \tau)),\p_2(\boldsymbol{\gamma}(\bar \tau))=(\p_1(\bar x),\p_2(\bar x))$.
By Assumption \ref{ass:nocusps}, $\mathbb J_C(\bo\p)$ is empty, hence $\frac{d}{d\tau}(\bo\p \circ \bo\gamma(\tau))$ does not vanish for any $\tau \in [0,1]$.
Therefore, we have that
\begin{align}
\begin{split}
\label{eq:proof_position}
        0&=\boldsymbol{0}\cdot \frac{d\boldsymbol{\gamma}}{d\tau}(\bar \tau)\\
        &=\nabla\bo \p^t(\bar p)\cdot\frac{d\boldsymbol{\gamma}}{d\tau}(\bar \tau)\\
        &=\left((1-t)\nabla \p_1(\bar x)+t\nabla \p_2(\bar x))\right)\cdot\frac{d\boldsymbol{\gamma}}{d\tau}(\bar \tau)\\
        &=(1-t)\nabla \p_1(\bar x)\cdot\frac{d\boldsymbol{\gamma}}{d\tau}(\bar \tau)+t\nabla \p_2(\bar x)\cdot\frac{d\boldsymbol{\gamma}}{d\tau}(\bar \tau)\\
        &=(1-t)\frac{d\p_1\circ\boldsymbol{\gamma}}{d\tau}(\bar \tau)+t\frac{d\p_2\circ\boldsymbol{\gamma}}{d\tau}(\bar \tau)\\
        &=(1-t)\frac{d\alpha_1}{d\tau}(\bar \tau)+t\frac{d\alpha_2}{d\tau}(\bar \tau)\\
        &=\frac{d\boldsymbol{\alpha}}{d\tau}(\bar \tau)\cdot (1-t,t).
\end{split}
\end{align}

\end{proof}

It may be useful here to observe how the Position Theorem \ref{thm:position} can be used to find the finite coordinates of the points in
$\dgm_k({\bo\p}^t)\setminus \{\Delta\}$.
The idea is to consider the family of parallel lines having the direction of the vector $(1-t,t)$ and to look for the points where these lines intersect the contours associated with the function
$\bo\p\colon M\to \R^2$ orthogonally.
For each such point $P$, one computes the scalar product $P\cdot (1-t,t)$.
As $P$ varies, one thus obtains all the finite coordinates of the points in
$\dgm_k({\bo\p}^t)\setminus \{\Delta\}$.

\section{Special values}

For the following definitions we need to introduce the notion of signed radius:

\begin{definition}
\label{def:signed_radius}
Let $\bo\alpha$ be a regular smooth curve in $\R^2$ and let $P \in \bo\alpha$.
If the curvature of $\bo\alpha$ at $P$ is non-zero,
    let $(x,y)$ and $\rho$ denote the centre and radius of the osculating circle to $\bo\alpha$ at $P$,
    and define the \emph{signed radius} of $\bo\alpha$ at $P$ as
    $\ell=\mathrm{sign}((P - (x,y))\cdot (1,0))\rho$.
\end{definition}

\begin{definition}
\label{def:special}
The \emph{special set}
of $(\bo \p,\bo \s)$, denoted by $\mathrm{Sp}(\bo \p,\bo \s)$, consists of the values $t\in [0,1]$ such that at least one of the following hold:
\begin{enumerate}
\item There exists a line $r$ with direction $(1-t,t)$ intersecting orthogonally a contour $\bo\alpha \in \mathrm{Ctr}(\bo\p,\bo\s)$ at one of its endpoints.
\item There exist lines $r_1,r_2,r_3,r_4$ with direction $(1-t,t)$ and contours $\bo\alpha_1,\bo\alpha_2,\bo\alpha_3,\bo\alpha_4\in\mathrm{Ctr}(\bo \p,\bo \s)$
such that $r_i$ intersects $\bo\alpha_i$ orthogonally at $P_i$, for each $i \in \left\{1,2,3,4\right\}$, with
$\{P_1,P_2\}\neq\{P_3,P_4\}$,
and $(P_1-P_2)\cdot(1-t,t) = c ((P_3-P_4)\cdot(1-t,t))$, for $c \in \{\frac{1}{2},1\}$. 
\item There exist lines $r_1,r_2$ with direction $(1-t,t)$ and contours $\bo\alpha_1,\bo\alpha_2 \in \mathrm{Ctr}(\bo \p,\bo \s)$
such that $r_i$ intersects the interior of $\bo\alpha_i$ orthogonally at $P_i$, for $i$ in $\left\{1,2\right\}$, with $P_1\neq P_2$ and
one of the following conditions holds:
\begin{itemize}
    \item $\bo\alpha_1$ has null curvature at $P_1$,
    \item $\bo\alpha_2$ has null curvature at $P_2$,
    \item $\ell_1$ and $\ell_2$ are defined and $\ell_1=\ell_2$,
    \item $\ell_1$ and $\ell_2$ are defined, with $\ell_1\ne \ell_2$, and
    \[t = \frac{\zeta}{1+\zeta} \text{ with } \zeta := \tan\left(\frac{1}{2}\arcsin\left(1-\left(\frac{(y_1-y_2)-(x_1-x_2)}{
    (\ell_1-\ell_2)
    } \right)^2\right)\right),\]
\end{itemize}
where $\ell_i$ is the signed radius and $(x_i, y_i)$ is the center of the osculating circle of $\bo \alpha_i$ at $P_i$, for $i$ in $\{1,2\}$.
\end{enumerate}
\end{definition}

\begin{remark}
\label{rem:0e1}
    Note that $t=0$ and $t=1$ are always special values for each $\bo \p,\bo\s$.
    For example, to see that $t=0$ is special, let $x$ in $M$ be a point at which $\p_1$ attains its maximum.
    Then
    $\bo\p(x)$ is the endpoint of some contour $\bo \alpha\in \text{Ctr}(\bo\p)$.
    In particular, let $\gamma \colon [0,1] \to M$ be such that $\bo\alpha(\tau) = \bo \p \circ \gamma (\tau)$ and suppose that $\gamma(0) = x$.
    Therefore,
    \[0 = \nabla \p_1(x) \cdot \frac{d\gamma}{d\tau}(0) = \frac{d(\p_1 \circ \gamma)}{d\tau}(0) = \frac{d \alpha_1}{d\tau}(0)= \frac{d\bo\alpha}{d\tau}(0)\cdot (1,0).\]

    Hence $(1,0)$ is orthogonal to $\bo\alpha$ at an endpoint.
    One shows in an analogous way that $t=1$ is also a special value.
\end{remark}

\begin{remark}
\label{rmk:intuition_special}
    The intuition behind Definition \ref{def:special}.2 is the following: if $t\in[0,1]$ satisfies Definition \ref{def:special}.2, then $t$ is a value at which the optimal matching between $\dgm(\bo\p^t)$ and $\dgm(\bo\s^t)$ may change abruptly.
    In particular, if $\sigma$ is such an optimal matching and
    $p_1,p_2,p_3,p_4 \in \dgm(\bo\p^t)\cup\dgm(\bo\s^t)$
    are such that $\cost \sigma = w_1 - w_2 = c(w_3-w_4)$, where $w_i$ is a finite coordinate of $p_i$, $i=1,\ldots,4$, then, in a neighbourhood of $t$, the optimal matching could be either the one matching $p_1$ to $p_2$ or the one matching $p_3$ to $p_4$, but not the other.
    A similar motivation led to the definition of special set in \cite{EFQT2023},
    relatively to the classical matching distance.
\end{remark}


\begin{remark}
    	\begin{figure}[htbp]
		\centering

        \resizebox{0.75\textwidth}{!}{%
        \begin{minipage}{0.4\textwidth}
			\centering
			\begin{tikzpicture}
				\draw[thick,samples=200,domain=0:2.5] plot(\x,{5-0.5*(\x)*(\x)});
				\node at (2.25,3.25) {$\bo\alpha$};
				\fill (1,4.5) circle(2pt);
				\fill (-1,2.5) circle(2pt) node[below] {$(x,y)$};
				\draw[dashed] (-1,2.5) circle (2.8284);
				\node at (-3,3.85) {$\hat{\bo\alpha}$};
				\draw[blue,thick] (1,4.5) -- (-1,2.5) node[midway,below] {$\rho$};
			\end{tikzpicture}
		\end{minipage}%
        \hspace{0.2\textwidth}
		\begin{minipage}{0.4\textwidth}
			\centering
			\begin{tikzpicture}
				\draw[thick,samples=200,domain=-4.5:-2] plot(\x,{0.5*(\x+2)*(\x+2)});
				\node at (-4.25,1.75) {$\bo\alpha'$};
				\fill (-3,0.5) circle(2pt);
				\fill (-1,2.5) circle(2pt);
				\draw[dashed] (-1,2.5) circle (2.8284) node[above] {$(x',y')$};
				\node at (-3,3.85) {$\hat{\bo\alpha'}$};
				\draw[blue,thick] (-3,0.5) -- (-1,2.5) node[midway,below] {$\rho'$};
			\end{tikzpicture}
		\end{minipage}
		}
		\caption{Osculating circles $\hat{\bo\alpha}$ and $\hat{\bo\alpha'}$ of centres $(x,y),(x',y')$ and radii $\rho,\rho'$ to a negative and positive parabola denoted by $\bo\alpha$ and $\bo\alpha'$, respectively, at a non-critical point.
        The signed radius $\ell$ from Definition \ref{def:signed_radius} is positive on the leftmost case and negative on the rightmost one.}
        \label{fig:osculating_circles}
	\end{figure}
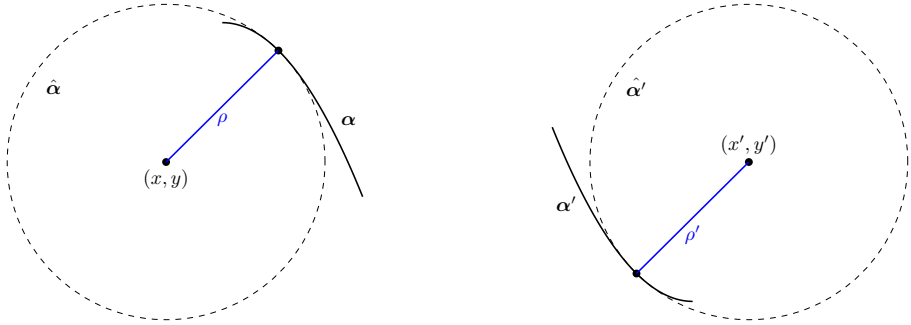
    We note that the radius $\ell^i$ appearing in Definition \ref{def:special}.3 has positive sign whenever $P_i$ is above and to the right of the centre of the osculating circle, and negative when it is below and on the left.
    These are the only cases possible due to Assumption \ref{ass:pareto}.4, which are illustrated in Figure \ref{fig:osculating_circles}.

\end{remark}

The next result characterises the values of $t$ at which the convex matching distance is attained.
These values always exist by Remark \ref{rmk:well-defined}.
We observe that when the convex matching distance is zero, every $t \in [0,1]$ realises it.

\begin{theorem}
\label{thm:main}
If $\cmd_k(\bo \p,\bo \s)>0$ and it is realised at $\bar t$, that is,
\[
\cmd_k(\bo \p,\bo \s)= d_{B,k}\left(\bo \p^{\bar t},\bo \s^{\bar t}\right),
\]
then $\bar t \in \mathrm{Sp}(\bo \p, \bo \s)$.
\end{theorem}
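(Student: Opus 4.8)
The strategy is to argue by contradiction: suppose $\bar t$ realises $\cmd_k(\bo\p,\bo\s)>0$ but $\bar t\notin\mathrm{Sp}(\bo\p,\bo\s)$, and derive a contradiction by showing that $t\mapsto d_{B,k}(\bo\p^t,\bo\s^t)$ can be strictly increased near $\bar t$. First, by Remark \ref{rem:0e1} we have $\bar t\in\,]0,1[$, so we may perturb $\bar t$ in both directions. By Remark~\ref{rem:asdkfjhaskjdfn}, $d_{B,k}(\bo\p^{\bar t},\bo\s^{\bar t})=c\,|w_0-w_1|$ for some $c\in\{\tfrac12,1\}$ and coordinates $w_0,w_1$ of points $p_0,p_1\in\dgm_k(\bo\p^{\bar t})\sqcup\dgm_k(\bo\s^{\bar t})$. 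The key point is that since $\bar t$ fails condition Definition~\ref{def:special}.1, no line of direction $(1-\bar t,\bar t)$ meets a contour orthogonally at an endpoint; hence by the Position Theorem~\ref{thm:position} (and its strengthening sketched in the commented Lemma, which we must establish: under Assumptions~\ref{ass:pareto},~\ref{ass:nocusps} the orthogonal intersection realising a finite coordinate occurs at an \emph{interior} point of the contour), each finite coordinate $w_i$ arises from an orthogonal intersection $P_i$ of a line $r_i$ of direction $(1-\bar t,\bar t)$ with the interior of a contour $\bo\alpha_i\in\mathrm{Ctr}(\bo\p,\bo\s)$, and $w_i=P_i\cdot(1-\bar t,t)$ — with the degenerate/infinite cases handled separately.

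Second, I would track how such a coordinate varies with $t$. Fix a contour $\bo\alpha=(\alpha_1,\alpha_2)$ and an interior orthogonal intersection point $\bo\alpha(\bar\tau)$ for direction $(1-\bar t,\bar t)$, so $\tfrac{d\bo\alpha}{d\tau}(\bar\tau)\cdot(1-\bar t,\bar t)=0$. Since $\bar t$ is not special, this is a nondegenerate situation, and by the implicit function theorem applied to $G(\tau,t):=\tfrac{d\bo\alpha}{d\tau}(\tau)\cdot(1-t,t)$ — whose $\tau$-derivative is $\tfrac{d^2\bo\alpha}{d\tau^2}(\bar\tau)\cdot(1-\bar t,\bar t)$, nonzero precisely because the signed radius is defined and the curvature is nonzero there (this is where Definition~\ref{def:special}.3's curvature clauses come in) — there is a smooth branch $\tau(t)$ near $\bar t$ with $\tau(\bar t)=\bar\tau$, giving a smooth function $w(t):=\bo\alpha(\tau(t))\cdot(1-t,t)$ locally tracking that coordinate of the persistence diagram (the stability bound from Remark~\ref{rmk:well-defined} guarantees the corresponding cornerpoint persists and its coordinate moves continuously). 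A direct computation, using $\tfrac{d\bo\alpha}{d\tau}(\bar\tau)\cdot(1-\bar t,\bar t)=0$, gives $w'(\bar t)=\bo\alpha(\bar\tau)\cdot(-1,1)=\alpha_2(\bar\tau)-\alpha_1(\bar\tau)$; and a second computation expresses $w''(\bar t)$ in terms of the signed radius $\ell$ and the centre $(x,y)$ of the osculating circle at $P=\bo\alpha(\bar\tau)$ — this is the geometric content that makes the formula $t=\zeta/(1+\zeta)$ in Definition~\ref{def:special}.3 appear.

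Third, I would assemble the contradiction. Let $D(t):=d_{B,k}(\bo\p^t,\bo\s^t)$; near $\bar t$ it equals a maximum of finitely many functions of the form $c\,|w_i(t)-w_j(t)|$ (finiteness of contours, Assumption~\ref{ass:pareto}), each $w_i$ smooth by the previous step, and $D(\bar t)=c\,|w_0(\bar t)-w_1(\bar t)|>0$ is attained by the pair $(p_0,p_1)$. Because $\bar t\notin\mathrm{Sp}(\bo\p,\bo\s)$: condition~2 failing means no other candidate pair ties the optimal cost at $\bar t$ (so on a one-sided neighbourhood $D(t)=c\,|w_0(t)-w_1(t)|$ for the \emph{same} pair, up to relabelling the roles across the $c\in\{\tfrac12,1\}$ cases), and condition~3 failing (together with condition~1) means the derivative of $c\,|w_0(t)-w_1(t)|$ at $\bar t$ is nonzero — indeed, if it vanished, then $w_0'(\bar t)=w_1'(\bar t)$, i.e.\ $\alpha_{1}$ and $\alpha_{2}$ components force $\ell_1=\ell_2$ or a curvature degeneracy or exactly the relation defining $\zeta$, each of which is excluded. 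Hence $D$ has nonzero one-sided derivative at $\bar t$, so moving $t$ to the side on which $D$ increases (recall $\bar t$ is interior) yields $D(t)>D(\bar t)=\cmd_k(\bo\p,\bo\s)$, contradicting the definition of $\cmd_k$ as a maximum. The main obstacle is the careful second-order bookkeeping in the middle step: proving that the strengthened Position Theorem holds (orthogonal intersections at interior points, the content of the commented-out Lemma~\ref{lem:no_orthogonal}), and verifying that the vanishing of $\tfrac{d}{dt}\big(c|w_0-w_1|\big)$ at a non-special $\bar t$ is \emph{exactly} captured by the four sub-conditions of Definition~\ref{def:special}.3 — in particular deriving the osculating-circle/signed-radius formula for $\zeta$ — and simultaneously ruling out the infinite-coordinate and $\Delta$-matching cases using Remark~\ref{rem:asdkfjhaskjdfn} and the piecewise structure of $d$.
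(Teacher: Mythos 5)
Your proposal follows the same overall strategy as the paper's proof: argue by contradiction, use Remark~\ref{rem:asdkfjhaskjdfn} to express $d_{B,k}(\bo\p^{\bar t},\bo\s^{\bar t})=c|w_0-w_1|$, invoke the Position Theorem to place $w_0,w_1$ on contours, rule out endpoint intersections via Definition~\ref{def:special}.1, use Definition~\ref{def:special}.2 to pin down a unique optimal pair near $\bar t$, and finally show that the derivative of $c|w_0(t)-w_1(t)|$ at $\bar t$ is nonzero because its vanishing would trigger one of the clauses of Definition~\ref{def:special}.3. You also correctly observe that the clauses of Definition~\ref{def:special}.3 concerning null curvature and $\ell_1\ne\ell_2$ are exactly what guarantee the smooth local continuation of the orthogonal-intersection branch. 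Your choice of parametrisation (implicit function theorem on $G(\tau,t)=\tfrac{d\bo\alpha}{d\tau}(\tau)\cdot(1-t,t)$) differs from the paper's (intersection of $\bo\alpha$ with the pencil of lines through the osculating-circle centre, after reparametrising $t$ by $\theta=\arctan\tfrac{t}{1-t}$), but because the orthogonality at $\bar t$ kills the tangent term, both give the same first derivative $w'(\bar t)=\bo\alpha(\bar\tau)\cdot(-1,1)$.

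However, there is an internal inconsistency worth flagging. In your second step you claim that the signed radius and osculating-circle centre enter through $w''(\bar t)$, and that this second-order computation is ``the geometric content that makes the formula $t=\zeta/(1+\zeta)$ appear.'' This is not how the paper's argument works, and it contradicts your own third step, where you (correctly) derive the contradiction from $w_0'(\bar t)=w_1'(\bar t)$. In fact, no second derivative is needed: writing each intersection point on its osculating circle as $P_i=(x_i,y_i)+\ell_i(\cos\bar\theta,\sin\bar\theta)$ and subtracting, the condition $w_0'(\bar t)-w_1'(\bar t)=0$ (equivalently, in the paper's $\theta$-parametrisation, $\dot w_0(\bar\theta)-\dot w_1(\bar\theta)=0$, which reduces to $(P_0-P_1)\cdot(-1,1)=0$ since the tangent term vanishes) is exactly
\[
\cos\bar\theta-\sin\bar\theta=\frac{(y_1-y_2)-(x_1-x_2)}{\ell_1-\ell_2},
\]
and the identity $(\cos\theta-\sin\theta)^2=1-\sin 2\theta$ converts this into the $\zeta$-formula of Definition~\ref{def:special}.3. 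So the osculating-circle data appear already in the \emph{first} derivative (they are merely an alternative way of expressing $P_0,P_1$), and the role of the curvature clauses is to ensure that $\ell_1,\ell_2$ are defined and $\ell_1\ne\ell_2$, not to supply second-order information. If you carry out your plan dropping the erroneous $w''$ step, the remainder matches the paper's proof.
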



\begin{proof}
By contradiction, suppose $\bar t \not\in \Sp(\bo \p,\bo \s)$ and $\cmd_k(\bo \p,\bo \s)=d_{B,k}\left(\bo \p^{\bar t},\bo \s^{\bar t}\right)$.

By Remark \ref{rem:asdkfjhaskjdfn} there exist coordinates $w_0,w_1\in \R$ of points in $\dgm_k(\bo\p^{\bar t}) \cup\dgm_k(\bo\s^{\bar t}) $
and a constant $c\in \{\frac{1}{2},1\}$ such that $\cmd_k(\bo \p,\bo \s) = c\vert w_0-w_1\vert > 0$.
Moreover, by Theorem \ref{thm:position}, there exist two
contours $\bo \alpha, \bo \beta \in \mathrm{Ctr}(\bo \p,\bo \s)$ such that, for some $\tau_\alpha$ and $\tau_\beta$ in $[0,1]$:
\begin{align*}
    \bo\alpha(\tau_\alpha) \cdot (1-\bar t,\bar t) = w_0,\\
    \bo\beta(\tau_\beta) \cdot (1-\bar t,\bar t) = w_1,
\end{align*}
where with a slight abuse of notation we denote with $\bo \alpha(\tau), \bo \beta(\tau')$ some smooth parametrisations of $\bo \alpha$ and $\bo \beta$ such that $(1-\bar t,\bar t)$ is orthogonal to $\bo \alpha$ and $\bo \beta$ at $\bo\alpha(\tau_\alpha)$ and $\bo\beta(\tau_\beta)$, respectively.
By Definition \ref{def:special}.2 no two distinct pairs of coordinates can realise $d_{B, k}\left(\bo \p^{\bar t},\bo \s^{\bar t}\right)$,
hence $\{w_0,w_1\}$
is unique.
Moreover, note that $\tau_\alpha$ and $\tau_\beta$ cannot be in $\{0,1\}$, since $0$ and $1$ are special values (see Remark \ref{rem:0e1}),
it is enough to consider
the interior of the contours $\bo \alpha$ and $\bo \beta$.

Now, we locally reparametrise the interior of the contours $\bo \alpha$ and $\bo \beta$ in neighborhoods of $\bo\alpha(\tau_\alpha)$ and $\bo\beta(\tau_\beta)$, respectively.
Let $\theta(t):= \arctan\left(\frac{t}{1-t}\right) \in ]0,\frac{\pi}{2}[$ for $t \in ]0,1[$
and let $\bar \theta := \theta(\bar t)$.
Consider the osculating circles $\hat {\bo \alpha}$ and $\hat {\bo \beta}$ at $\bo\alpha(\tau_\alpha)$ and $\bo\beta(\tau_\beta)$,
which may be parametrised as $\hat{\bo\alpha}(\theta) := (x_{\bo\alpha},y_{\bo\alpha}) + \ell_{\bo\alpha}(\cos \theta,\sin \theta)$ and $\hat{\bo\beta}(\theta) := (x_{\bo\beta},y_{\bo\beta}) + \ell_{\bo\beta}(\cos \theta,\sin \theta)$, where $\ell_{\bo\alpha},\ell_{\bo\beta}$ and $(x_{\bo\alpha},y_{\bo\alpha}),(x_{\bo\beta},y_{\bo\beta})$ denote the
signed radii -- in the sense of Definition \ref{def:signed_radius} -- and centres of the osculating circles, and the parameter $\theta$ varies in $[0,2\pi[$.
Note that, $\hat{\bo\alpha}(\bar \theta) = \bo\alpha(\tau_\alpha)$.
Moreover,
by Definition \ref{def:special}.3 and Assumption \ref{ass:nocusps},
we have $\vert \ell_{\bo\alpha}\vert , \vert \ell_{\bo\beta}\vert  \in ]0,\infty[$.
Consider the line $r(\theta) := \{(x_{\bo\alpha},y_{\bo\alpha}) + s(\cos \theta,\sin \theta) \mid s \in \R\}$, and let $\bo\alpha(\theta)$ be the local and smooth parametrisation of $\bo \alpha$ given by the intersection $\bo \alpha \cap r(\theta)$
in a neighbourhood of $\bar \theta$.
The existence of this parametrisation is by the transversality between $\bo\alpha$ and $r(\theta)$ provided by Assumption \ref{ass:pareto}.

By reparametrising $\bo\beta$ in a neighborhood of $\bo \beta(\tau_\beta)$ in the same fashion, we obtain $\bo \beta(\bar \theta) = \bo\beta(\tau_\beta)$.
Thus, we can consider the difference $w_0(\theta) - w_1(\theta) = \bo \alpha(\theta) \cdot (1-t,t) - \bo \beta(\theta) \cdot (1-t,t)$.
To finish the proof we will show that $\dot w_0(\bar \theta) - \dot w_1(\bar \theta) \ne 0$, where we denote with the upper dot the derivative with respect to $\theta$.

By definition of $\theta$ and osculating circles, we have $\bo\alpha(\bar\theta) = \hat{\bo\alpha}(\bar\theta)$, $\bo\beta(\bar\theta) = \hat{\bo\beta}(\bar\theta)$, $\dot{\bo\alpha}(\bar\theta) = \dot{\hat{\bo\alpha}}(\bar\theta)$ and $\dot{\bo\beta}(\bar\theta) = \dot{\hat{\bo\beta}}(\bar\theta)$.
Noting that $(1-t,t) = \left(\frac{\cos \theta}{\cos \theta +\sin\theta},\frac{\sin \theta}{\cos \theta +\sin\theta}\right)$, we can explicitly compute the derivative $\dot w_0(\theta)$ in a neighborhood of $\bar\theta$:
\begin{align}
\begin{split}
    \label{eq:dotw0}
    \dot w_0(\theta) &= \frac{\partial}{\partial \theta} \left[\bo\alpha(\theta) \cdot  \left(\frac{\cos \theta}{\cos \theta +\sin\theta},\frac{\sin \theta}{\cos \theta +\sin\theta}\right)\right] \\
    &= \dot{\bo\alpha}(\theta)\cdot \left(\frac{\cos \theta}{\cos \theta +\sin\theta},\frac{\sin \theta}{\cos \theta +\sin\theta}\right)  +\bo\alpha(\theta)\cdot \frac{(-1,1)}{(\cos \theta + \sin \theta)^2}.
    \end{split}
\end{align}

The denominators in the previous expression do not vanish since $\theta \in ]0,\frac{\pi}{2}[$.
As $\dot{\bo\alpha}(\bar \theta) = \dot{\hat{\bo\alpha}}(\bar \theta)$,
we can rewrite \eqref{eq:dotw0} using the parametrisation of the osculating circle $\hat{\bo\alpha}$:
\begin{align}
\begin{split}
    \label{eq:dotw0bartheta}
    \dot w_0(\bar\theta) &= \left[\ell_{\bo\alpha}(-\sin\theta,\cos \theta)\cdot \left(\frac{\cos \theta}{\cos \theta +\sin\theta},\frac{\sin \theta}{\cos \theta +\sin\theta}\right)\right. \\
    &\left.\quad +\left((x_{\bo\alpha},y_{\bo\alpha}) + \ell_{\bo\alpha}(\cos \theta,\sin \theta)\right)\cdot \frac{(-1,1)}{(\cos \theta + \sin \theta)^2}\right]_{\theta = \bar\theta} \\
    &=\left[\frac{y_{\bo\alpha}-x_{\bo\alpha}+\ell_{\bo\alpha}(\sin\theta - \cos\theta)}{(\cos \theta + \sin \theta)^2}\right]_{\theta = \bar\theta}.
\end{split}
\end{align}


By using the osculating circle $\hat{\bo\beta}$ we can replicate the same reasoning to compute $\dot w_1(\bar\theta)$. Substracting both formulas we obtain:
\begin{equation}
\label{eq:aiuto}
    \frac{d}{d\theta}\left[w_0(\theta)-w_1(\theta)\right]_{\theta = \bar\theta} = 0 \quad \text{if and only if} \quad \cos \bar \theta - \sin \bar \theta = \frac{(y_{\bo\alpha}-y_{\bo\beta})-(x_{\bo\alpha}-x_{\bo\beta})}
    {\ell_{\bo\alpha}-\ell_{\bo\beta}},
\end{equation}
where the radii $\ell_{\bo\alpha},\ell_{\bo\beta}$ have positive or negative sign according to Definition \ref{def:signed_radius}.


Note that the denominator on the right-hand side of \eqref{eq:aiuto} does not vanish by Definition \ref{def:special}.3. 
Recall the trigonometric formula $\left(\cos \theta - \sin \theta\right)^2 = 1 - \sin 2\theta$, which applied to \eqref{eq:aiuto} yields
\[\bar \theta = \frac{1}{2}\arcsin\left(1-\left(\frac{(y_{\bo\alpha}-y_{\bo\beta})-(x_{\bo\alpha}-x_{\bo\beta})}{
\rho_{\bo\alpha}-\rho_{\bo\beta}
}\right)^2\right).\]

Lastly, applying back the change of variables $\tan \theta = \frac{t}{1-t}$ to the left-hand side of the last equation with $t=\bar t$
yields exactly the condition
in Definition \ref{def:special}.3, implying $\bar t \in \mathrm{Sp}(\bo\p,\bo\s).$
This generates the desired contradiction.


\end{proof}

\begin{remark}\label{remarksuint}
In the proof of Theorem \ref{thm:main}, we use the fact that the parametrisation of $\bo \alpha$ defined by the intersection $\bo \alpha \cap r(\theta)$ is smooth in a neighborhood of $\bo\alpha(\tau_\alpha)$.
The fact that $\bo \alpha(\theta)$ is a well-defined and smooth curve follows from the assumption that all lines passing through the point $(x_{\bo\alpha}, y_{\bo\alpha})$ intersect the contour $\bo \alpha$ transversally.
\end{remark}

\section{Experiments}
\label{sec:experiments}
Throughout the section, every function $\boldsymbol{\p}=(\p_1,\p_2)\colon X\to \R^2$ takes values in $[0, 1]^2$. This prevents scale-dependent artifacts in both the convex combinations and the bottleneck distance computations. Moreover, each point cloud will lie in the unit cube. We remind that $\boldsymbol{a} = (a, 1-a)$ and $\boldsymbol{b} = (b, -b)$.

The aim of this section is to evaluate the convex matching distance against the classical matching distance. In practice, both distances are approximated by evaluating the maximum over finite grids. Specifically, we evaluate three scenarios:
\begin{itemize}
    \item \textbf{CMD:} approximation of $\cmd_k$ by restricting to a uniform grid of $11$ values $t \in \{0, 0.1, \dots, 1\}$ in the convex combination $(1-t)\varphi_1 + t\varphi_2$;
    \item \textbf{MD~121:} approximation of $d_{\mathrm{match},k}$ by restricting to a fine $11 \times 11$ grid with $a$ ranging from $0.05$ to $0.95$ and $b \in \{0, 0.1, \dots, 1\}$ in the parameter space for $\boldsymbol{\p}^*_{\boldsymbol{a},\boldsymbol{b}}, \boldsymbol{\psi}^*_{\boldsymbol{a},\boldsymbol{b}}$;
    \item \textbf{MD~10:} approximation of $d_{\mathrm{match},k}$ by restricting to a coarser $2 \times 5$ grid with $a \in \{0.25, 0.75\}$ and $b \in \{0, 0.25, 0.5, 0.75, 1\}$.
\end{itemize}
The goal is to show that CMD provides a reliable alternative for MD~121 (which is approximately $11$ times slower), while outperforming MD~10, whose computational cost is comparable to CMD. We conduct three experiments: (i) a direct evaluation of distance quality on MNIST digits (Section~\ref{sec:mnist}), (ii) a classification task on synthetic shapes (Section~\ref{sec:synthetic}) and (iii) a classification task on chaotic attractors (Section~\ref{sec:attractors}). We conclude with a dedicated timing analysis (Section~\ref{sec:timing}), showing that the empirical computational cost aligns with the expected theoretical speed-up of CMD over the classical matching distance.

All classification experiments use $k$-nearest neighbors ($k=3$) with bottleneck distance (tolerance $\varepsilon = 0.01$). Classification is performed using a most-confident selection strategy across $H_0$ and $H_1$, and performance is evaluated via 10-fold cross-validation. Code and results are available at \url{https://doi.org/10.5281/zenodo.18644292}. Persistent homology computations were carried out using the GUDHI library~\cite{gudhi}.

\subsection{Distance quality on MNIST}
\label{sec:mnist}
The aim of this section is to quantify CMD, MD~121 and MD~10 on MNIST, 
and assess whether the discriminating power of CMD is comparable to that of MD121, which in turn is higher than that of MD10.
To keep the computational cost limited, we selected $100$ samples for each of the 10 classes, yielding $\binom{1000}{2} = 499,500$ pairs.
Let $X$ denote the $28 \times 28$ pixel grid.
We define the functions $\p_1(x) = I(x) / 255$, where $I(x) \in [0, 255]$ is the pixel intensity, and $\p_2$ as the radial distance from the image center, restricted to active pixels ($I(x) > 50$) and set to $0$ otherwise.
The function $\p_2$ is normalised to take values in $[0, 1]$.
For persistence computation, we construct a cubical complex on the $28 \times 28$ pixel grid, using the superlevel set filtration induced either by $\boldsymbol{\p}^t$ (for CMD) or $\boldsymbol{\p}^*_{\boldsymbol{{a}},\boldsymbol{b}}$ (for MD~121 and MD~10).

Figure~\ref{fig:mnist_distances} displays scatterplots comparing the three distances. Each point corresponds to an image pair, with coordinates given by the two distances being compared: for instance, in panel (a), the $x$-coordinate is CMD and the $y$-coordinate is MD~121, both computed in homological dimension 0. Points near the diagonal (red dashed line) indicate image pairs where both distances yield similar values, while points far from the diagonal indicate that one distance dominates over the other. As shown in Figure~\ref{fig:mnist_distances} (a, b), CMD and MD~121 are nearly identical in both homological dimensions. In contrast, the comparison between CMD and MD~10 (Figure~\ref{fig:mnist_distances} (c, d)) favors CMD. As a further verification, the Spearman correlation between CMD and MD~121 is $0.980$ ($H_0$) and $0.993$ ($H_1$), compared to $0.926$ ($H_0$) and $0.955$ ($H_1$) for MD~10 vs MD~121.
This confirms that CMD agrees more with the fine-grid MD than its coarser version, while maintaining a computational cost comparable to the latter.

\begin{figure*}[htbp]
    \centering
    \begin{minipage}[b]{0.24\textwidth}
        \centering
        \includegraphics[width=\textwidth]{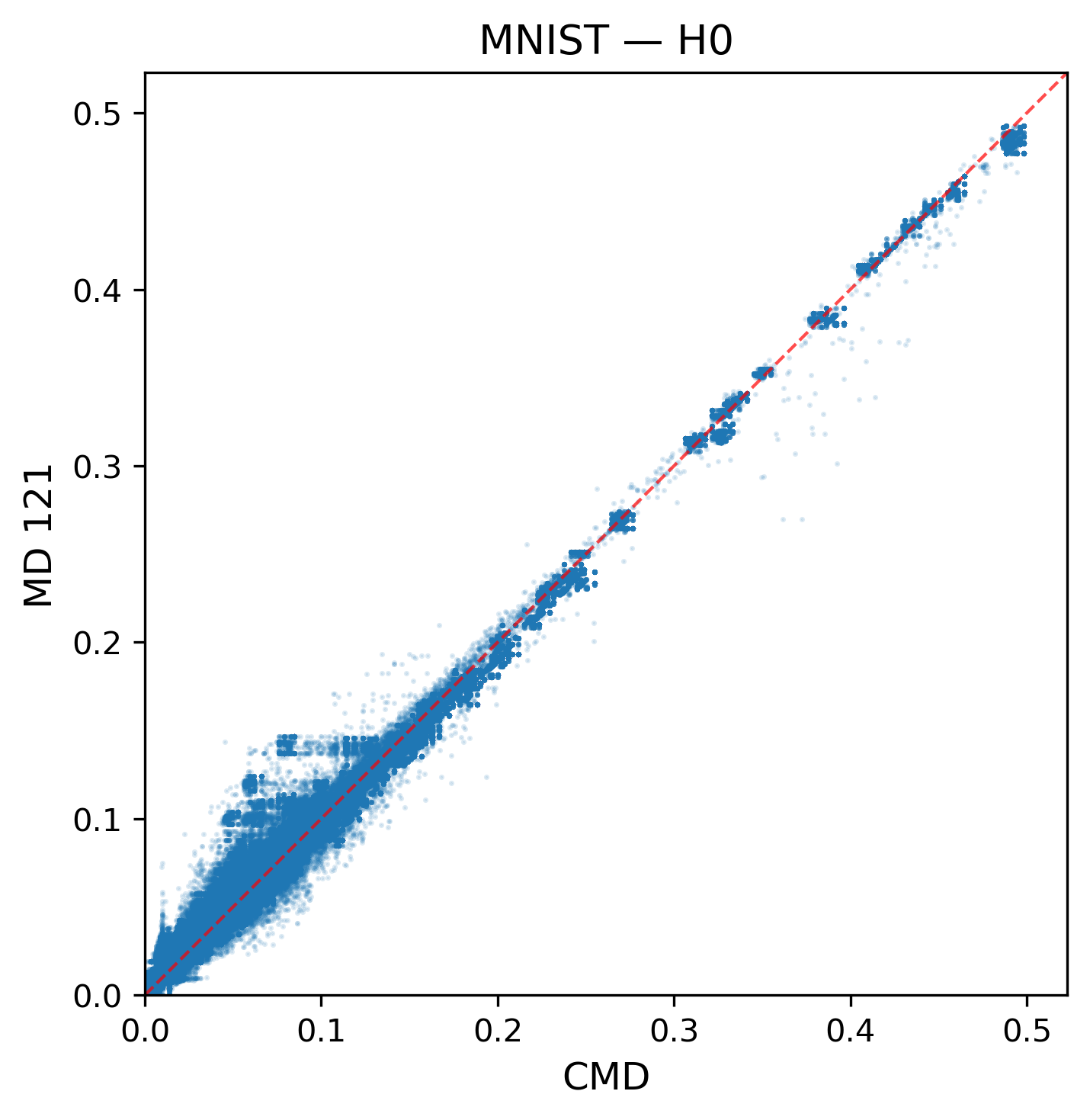}
        \\ (a) MNIST: CMD vs MD~121 for $H_0$.
    \end{minipage}
    \hfill
    \begin{minipage}[b]{0.24\textwidth}
        \centering
        \includegraphics[width=\textwidth]{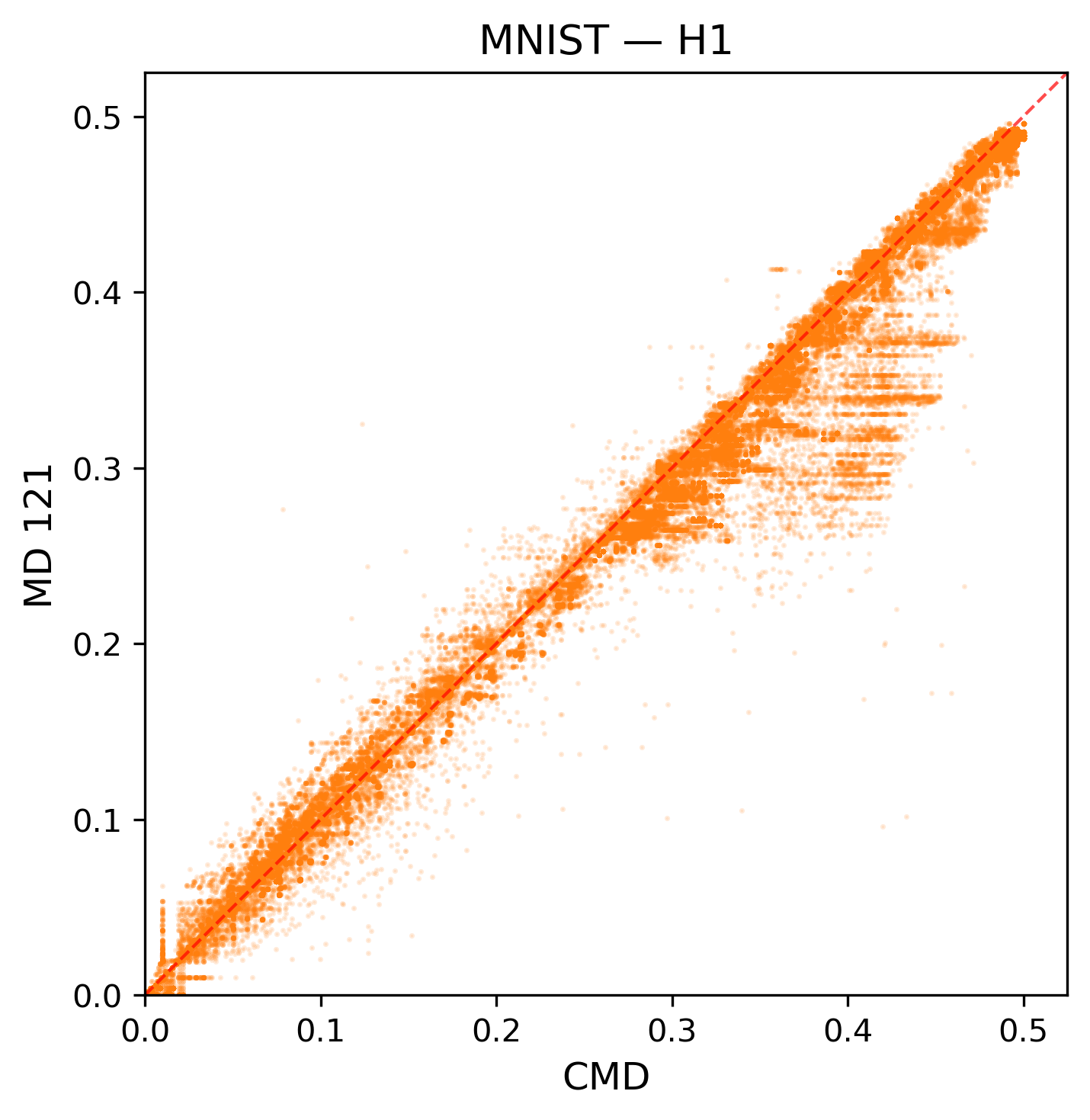}
        \\ (b) MNIST: CMD vs MD~121 for $H_1$.
    \end{minipage}
    \hfill
    \begin{minipage}[b]{0.24\textwidth}
        \centering
        \includegraphics[width=\textwidth]{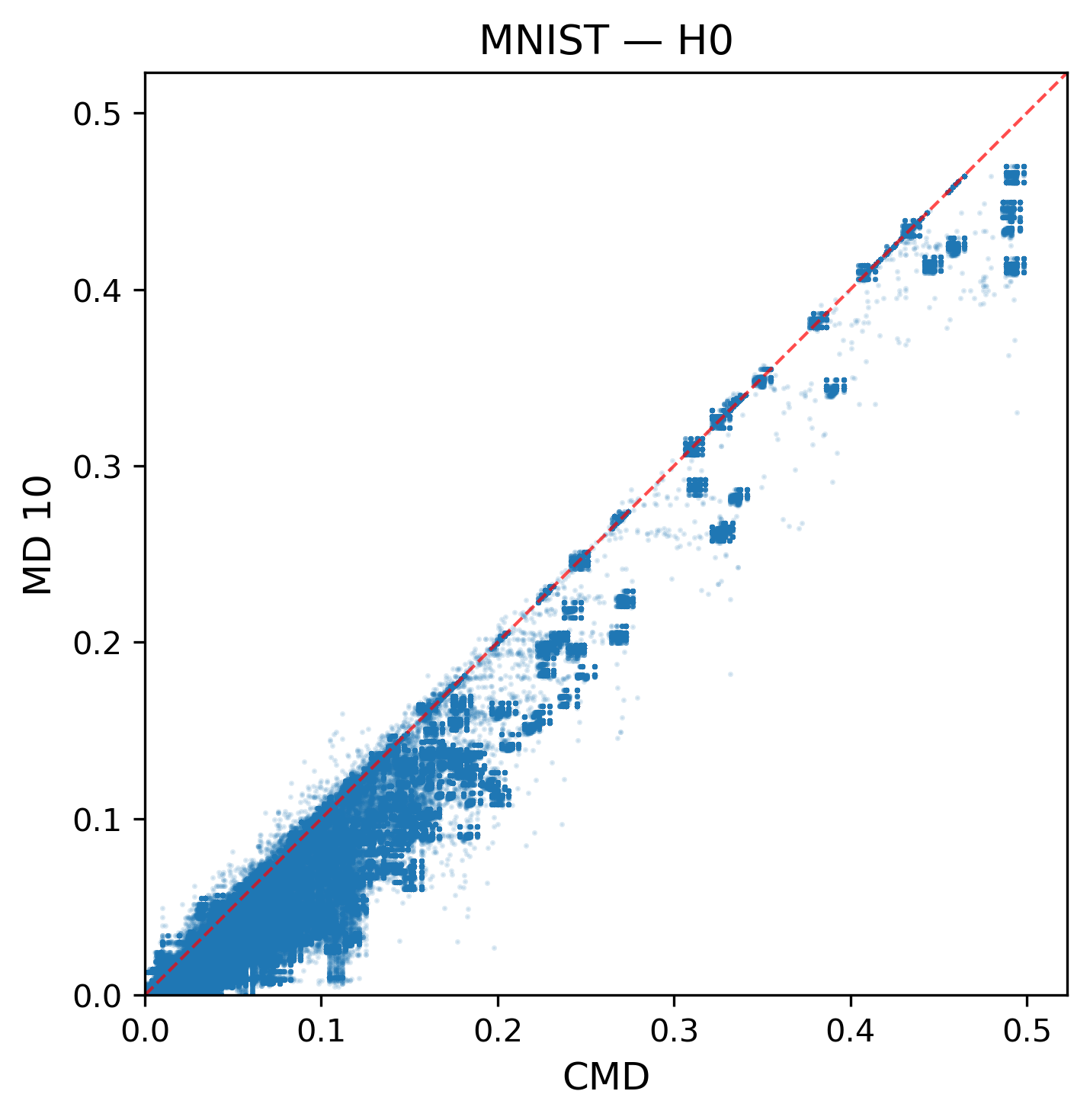}
        \\ (c) MNIST: CMD vs MD~10 for $H_0$.
    \end{minipage}
    \hfill
    \begin{minipage}[b]{0.24\textwidth}
        \centering
        \includegraphics[width=\textwidth]{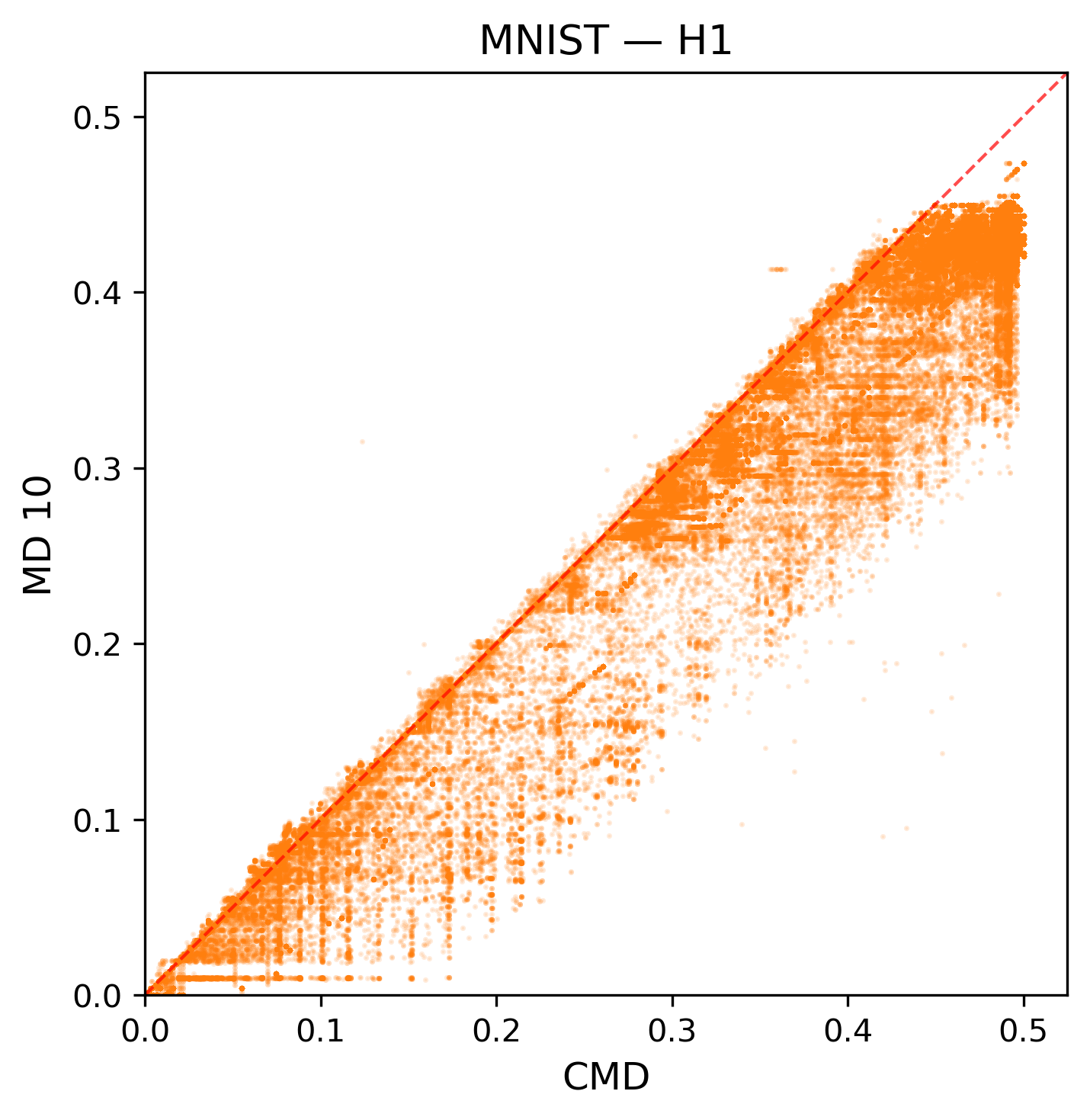}
        \\ (d) MNIST: CMD vs MD~10 for $H_1$.
    \end{minipage}

    \caption{Distance comparison for MNIST. (a,b) CMD vs MD~121 for $H_0$ and $H_1$. (c,d) CMD vs MD~10 for $H_0$ and $H_1$. Red dashed line shows $y=x$.}
    \label{fig:mnist_distances}
\end{figure*}

\subsection{Classification of synthetic shapes}
\label{sec:synthetic}
As a second experiment, we use CMD, MD~121, and MD~10 directly as classifiers in a $k$-nearest neighbors framework.
For each test sample, we compute the $k=3$ nearest neighbors in both $H_0$ and $H_1$ separately.
We then select the prediction from the homological dimension with higher confidence, defined as the proportion of the $k$ neighbors voting for the most common class.
In case of equal confidence, we use the average distance as a tiebreaker.
We generate five classes of 3D point clouds in $[0, 1]^3$: circle, sphere, torus, three clusters, and two concentric circles.
We refer to Figure~\ref{fig:synthetic_examples} for examples of point clouds for each class, colored by eccentricity.
The bifiltration uses $\varphi_1$ as the distance to the $15$-th neighbor (codensity), and $\varphi_2$ as eccentricity~\cite{BotschEurographicsSO}, both normalized to $[0, 1]$.
Persistence is computed via Alpha complex with lower-star extension.
Each class contains $40$ samples, and we test across additive Gaussian noise levels $\sigma \in \{0.00, 0.03, 0.06, 0.09, 0.12\}$ applied independently to each coordinate of the point cloud, and number of points in the point cloud $n \in \{500, 1000, 5000\}$.
As single-parameter baselines, we compute both codensity lower-star and eccentricity lower-star persistence.
Table~\ref{tab:synthetic} reports classification accuracies (mean $\pm$ standard deviation) across all combinations of sample size $n$ and noise level $\sigma$. The results are consistent across all configurations: CMD and MD~121 achieve comparable performance throughout, while MD~10 systematically underperforms. The key difference emerges at high sample size and moderate-to-high noise. At $n = 5000$, $\sigma = 0.06$: CMD achieves $97.5\%$, surpassing MD~121 ($94.2\%$), while MD~10 drops to $80.8\%$. At $\sigma = 0.12$: CMD ($78.2\%$) and MD~121 ($81.0\%$) remain comparable, while MD~10 falls to $58.0\%$. Figure~\ref{fig:synthetic_n5000} visualizes this trend for $n=5000$, showing that CMD maintains robust performance as noise increases, while MD~10 degrades significantly at $\sigma \geq 0.09$. Notably, MD~10 performs comparably to the best single-parameter baseline (codensity lower-star, see Table~\ref{tab:synthetic_1ph}), while CMD and MD~121 substantially outperform all 1-PH methods, confirming that the second filtration carries complementary discriminative information.

\begin{figure}[htbp]
    \centering
    \begin{minipage}[b]{0.19\textwidth}
        \centering
        \includegraphics[width=\textwidth]{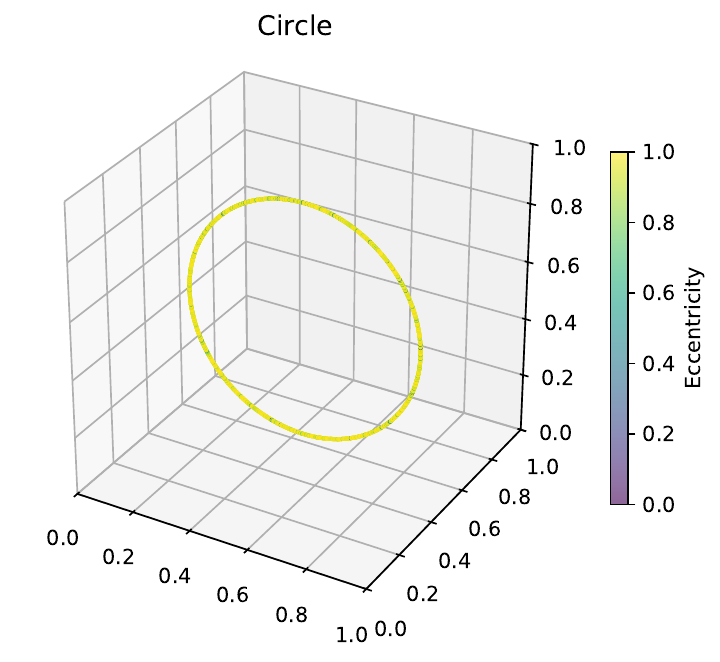}
        \\ Circle
    \end{minipage}
    \hfill
    \begin{minipage}[b]{0.19\textwidth}
        \centering
        \includegraphics[width=\textwidth]{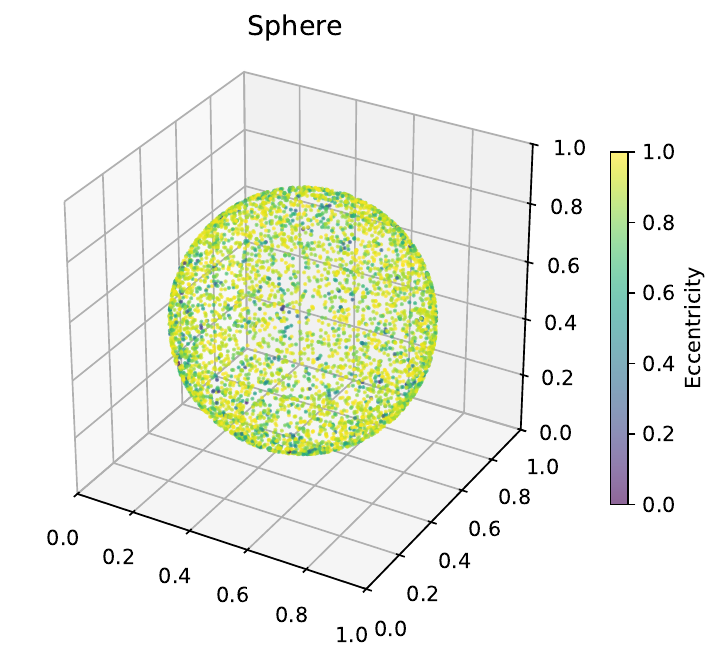}
        \\ Sphere
    \end{minipage}
    \hfill
    \begin{minipage}[b]{0.19\textwidth}
        \centering
        \includegraphics[width=\textwidth]{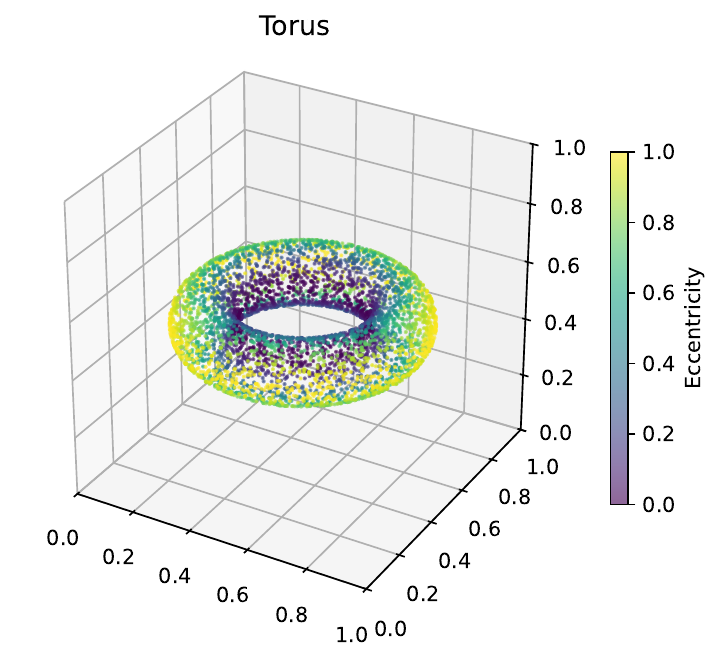}
        \\ Torus
    \end{minipage}
    \hfill
    \begin{minipage}[b]{0.19\textwidth}
        \centering
        \includegraphics[width=\textwidth]{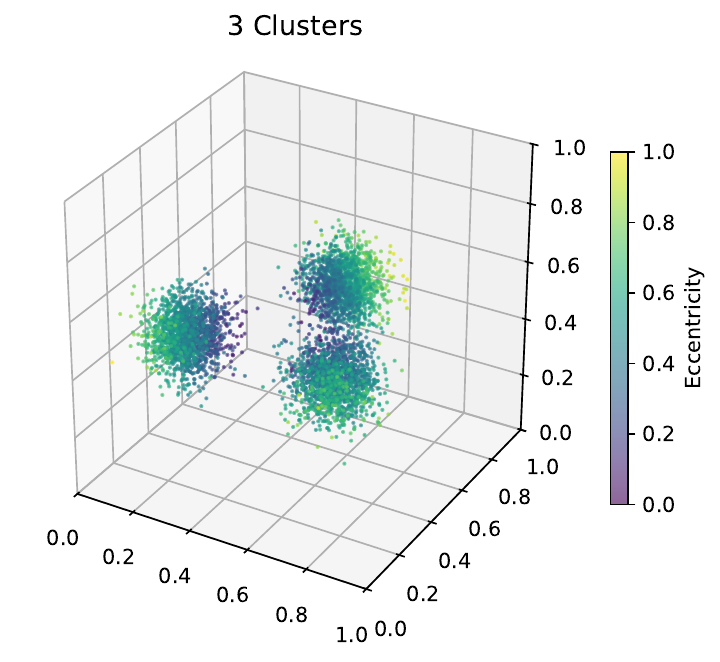}
        \\ 3 Clusters
    \end{minipage}
    \hfill
    \begin{minipage}[b]{0.19\textwidth}
        \centering
        \includegraphics[width=\textwidth]{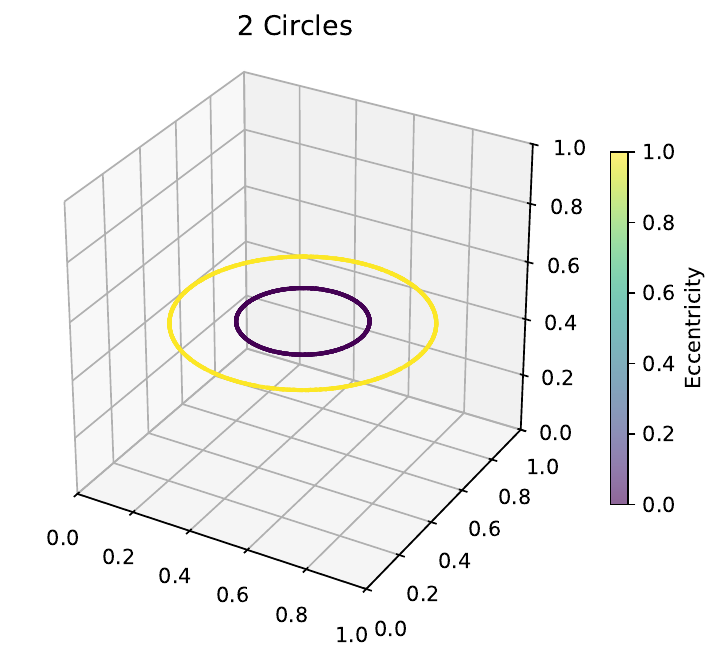}
        \\ 2 Circles
    \end{minipage}

    \caption{Examples of synthetic 3D point clouds for each class ($n=5000$, $\sigma=0.0$), colored by eccentricity.}
    \label{fig:synthetic_examples}
\end{figure}

\begin{table*}[ht]
\centering
\caption{Classification accuracy (\%) on synthetic shapes.}
\label{tab:synthetic}
\resizebox{\textwidth}{!}{%
\begin{tabular}{c|ccc|ccc|ccc}
\toprule
 & \multicolumn{3}{c|}{$n=500$} & \multicolumn{3}{c|}{$n=1000$} & \multicolumn{3}{c}{$n=5000$} \\
$\sigma$ & CMD & MD~121 & MD~10 & CMD & MD~121 & MD~10 & CMD & MD~121 & MD~10 \\
\midrule
0.00 & $91.8 \pm 5.3$ & $91.0 \pm 6.2$ & $91.3 \pm 5.6$ & $94.5 \pm 3.9$ & $92.5 \pm 6.4$ & $93.3 \pm 4.1$ & $99.3 \pm 0.8$ & $99.8 \pm 0.5$ & $99.0 \pm 1.1$ \\
0.03 & $85.2 \pm 2.8$ & $88.0 \pm 4.0$ & $84.5 \pm 3.4$ & $94.5 \pm 2.5$ & $94.2 \pm 1.5$ & $93.3 \pm 2.7$ & $99.3 \pm 0.8$ & $98.5 \pm 0.9$ & $97.2 \pm 2.2$ \\
0.06 & $76.0 \pm 3.3$ & $80.7 \pm 4.4$ & $69.5 \pm 5.1$ & $89.0 \pm 2.4$ & $92.5 \pm 3.6$ & $78.0 \pm 7.4$ & $97.5 \pm 1.9$ & $94.2 \pm 2.5$ & $80.8 \pm 4.2$ \\
0.09 & $74.0 \pm 5.9$ & $75.0 \pm 5.3$ & $70.3 \pm 1.9$ & $77.3 \pm 6.4$ & $82.3 \pm 6.3$ & $78.8 \pm 6.9$ & $88.5 \pm 2.8$ & $89.0 \pm 3.2$ & $86.3 \pm 2.6$ \\
0.12 & $69.0 \pm 4.8$ & $69.3 \pm 4.9$ & $64.0 \pm 5.7$ & $69.5 \pm 5.2$ & $74.0 \pm 5.4$ & $70.0 \pm 5.8$ & $78.2 \pm 2.8$ & $81.0 \pm 3.5$ & $58.0 \pm 5.2$ \\
\bottomrule
\end{tabular}%
}
\end{table*}

\begin{figure}[htbp]
    \centering
    \includegraphics[width=0.9\textwidth]{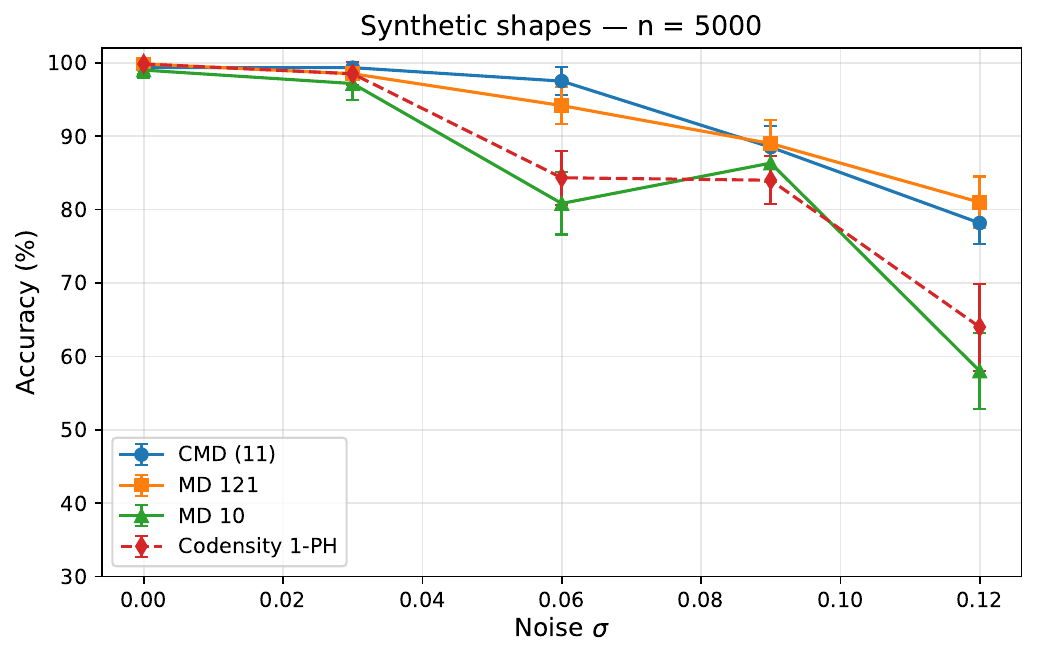}
    \caption{Classification accuracy vs noise for synthetic shapes at $n=5000$. CMD maintains robust performance across all noise levels, while MD~10 degrades significantly at high noise ($\sigma \geq 0.09$).}
    \label{fig:synthetic_n5000}
\end{figure}

\begin{table}[ht]
\centering
\caption{Single-parameter baseline accuracies (\%) on synthetic shapes.}
\label{tab:synthetic_1ph}
\resizebox{\textwidth}{!}{%
\begin{tabular}{c|cc|cc|cc}
\toprule
 & \multicolumn{2}{c|}{$n=500$} & \multicolumn{2}{c|}{$n=1000$} & \multicolumn{2}{c}{$n=5000$} \\
$\sigma$ & Codensity LS & Ecc LS & Codensity LS & Ecc LS & Codensity LS & Ecc LS \\
\midrule
0.00 & $83.7 \pm 3.7$ & $75.2 \pm 5.6$ & $93.2 \pm 3.4$ & $71.3 \pm 7.8$ & $99.8 \pm 0.5$ & $78.3 \pm 6.1$ \\
0.03 & $75.0 \pm 6.2$ & $24.2 \pm 3.7$ & $91.8 \pm 4.4$ & $23.5 \pm 4.2$ & $98.5 \pm 1.6$ & $25.0 \pm 4.0$ \\
0.06 & $77.0 \pm 4.0$ & $20.5 \pm 2.5$ & $81.7 \pm 3.9$ & $22.0 \pm 2.9$ & $84.3 \pm 3.7$ & $21.0 \pm 4.9$ \\
0.09 & $76.7 \pm 5.8$ & $19.2 \pm 3.0$ & $78.0 \pm 4.6$ & $24.8 \pm 6.8$ & $84.0 \pm 3.3$ & $28.3 \pm 8.9$ \\
0.12 & $62.5 \pm 4.8$ & $21.3 \pm 2.2$ & $69.3 \pm 3.5$ & $22.2 \pm 6.4$ & $64.0 \pm 5.9$ & $25.3 \pm 5.2$ \\
\bottomrule
\end{tabular}%
}
\end{table}

\subsection{Classification of chaotic attractors}
\label{sec:attractors}
Lastly, we classify five chaotic dynamical systems: Lorenz, Rössler, Thomas, Sprott, and Four-Wing attractors.
Chaotic attractors have been widely studied in the topological data analysis literature~\cite{10.1115/1.4055184, hussain2025topological, TEMPELMAN2020132446, 7533141}.
Each trajectory is sampled with $n = 500$ points and rescaled to $[0,1]^3$.
The bifiltration uses $\p_1 = \sqrt{\alpha}$ (Alpha complex filtration values, clipped at $0.15$ for numerical stability) and $\p_2 = $ eccentricity (lower-star extension).
Clipping avoids degenerate filtration values arising from quasi-coplanar points.
We generate 40 samples per class.
We refer to Figure~\ref{fig:attractors_examples} for visual examples of trajectories from each attractor class.
The considered approximation of the convex matching distance performs comparably in this dataset to the two approximations of the matching distance: CMD $95.2 \pm 1.9\%$, MD~121 $96.5 \pm 2.4\%$, and MD~10 $93.8 \pm 3.4\%$.
In fact, the topological signal is strong enough that even a coarse approximation of the matching distance suffices.
However, a distance quality analysis as in Section~\ref{sec:mnist} (Figure~\ref{fig:attractors_distances}) reveals that CMD and MD~121 agree more closely with each other than either does with MD~10, consistent with our previous findings.

\begin{figure}[htbp]
    \centering
    \begin{minipage}[b]{0.19\textwidth}
        \centering
        \includegraphics[width=\textwidth]{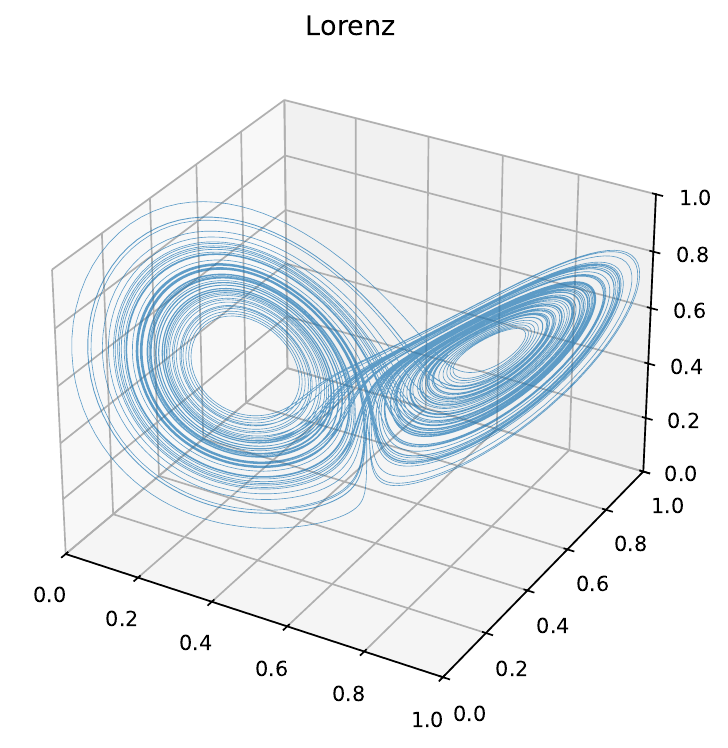}
        \\ Lorenz
    \end{minipage}
    \hfill
    \begin{minipage}[b]{0.19\textwidth}
        \centering
        \includegraphics[width=\textwidth]{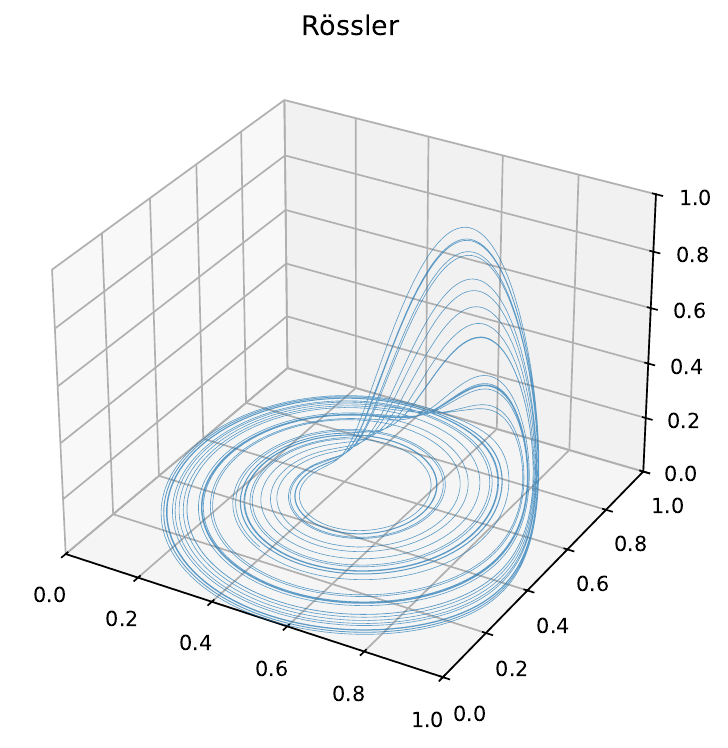}
        \\ Rössler
    \end{minipage}
    \hfill
    \begin{minipage}[b]{0.19\textwidth}
        \centering
        \includegraphics[width=\textwidth]{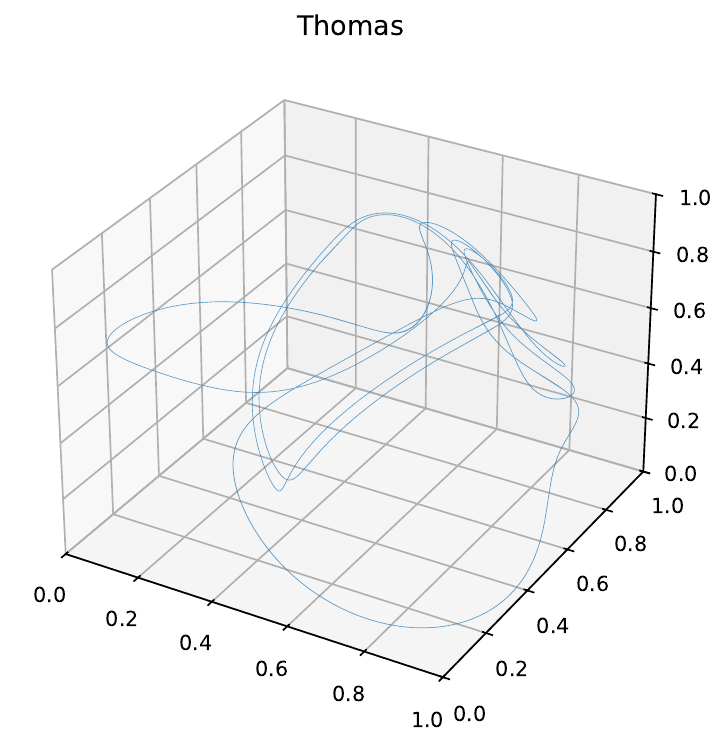}
        \\ Thomas
    \end{minipage}
    \hfill
    \begin{minipage}[b]{0.19\textwidth}
        \centering
        \includegraphics[width=\textwidth]{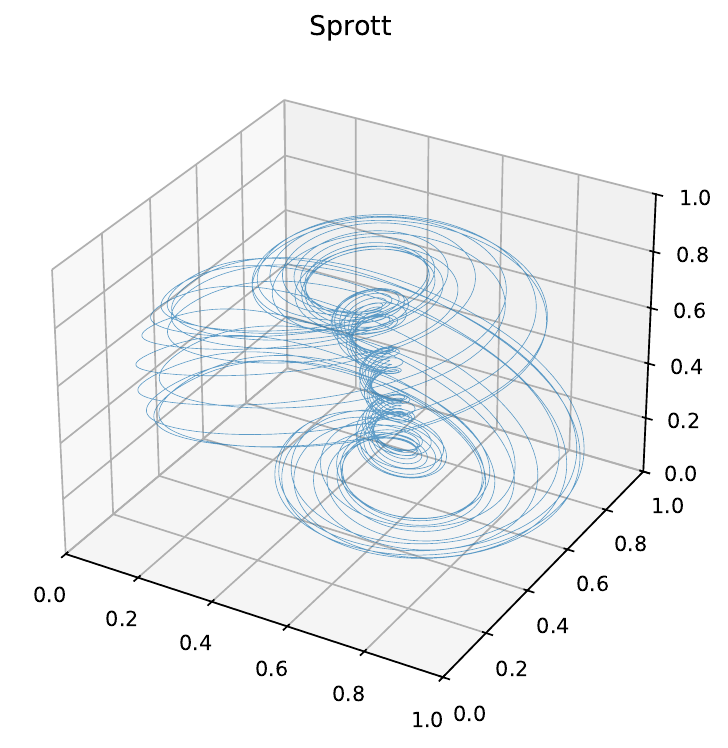}
        \\ Sprott
    \end{minipage}
    \hfill
    \begin{minipage}[b]{0.19\textwidth}
        \centering
        \includegraphics[width=\textwidth]{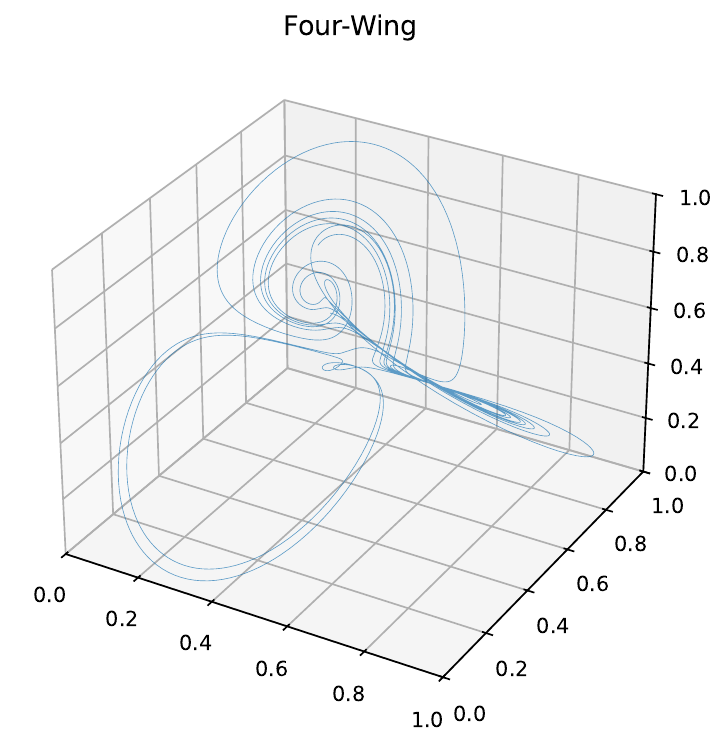}
        \\ Four-Wing
    \end{minipage}

    \caption{Examples of attractors trajectories.}
    \label{fig:attractors_examples}
\end{figure}

\begin{figure*}[htbp]
    \centering
    \begin{minipage}[b]{0.24\textwidth}
        \centering
        \includegraphics[width=\textwidth]{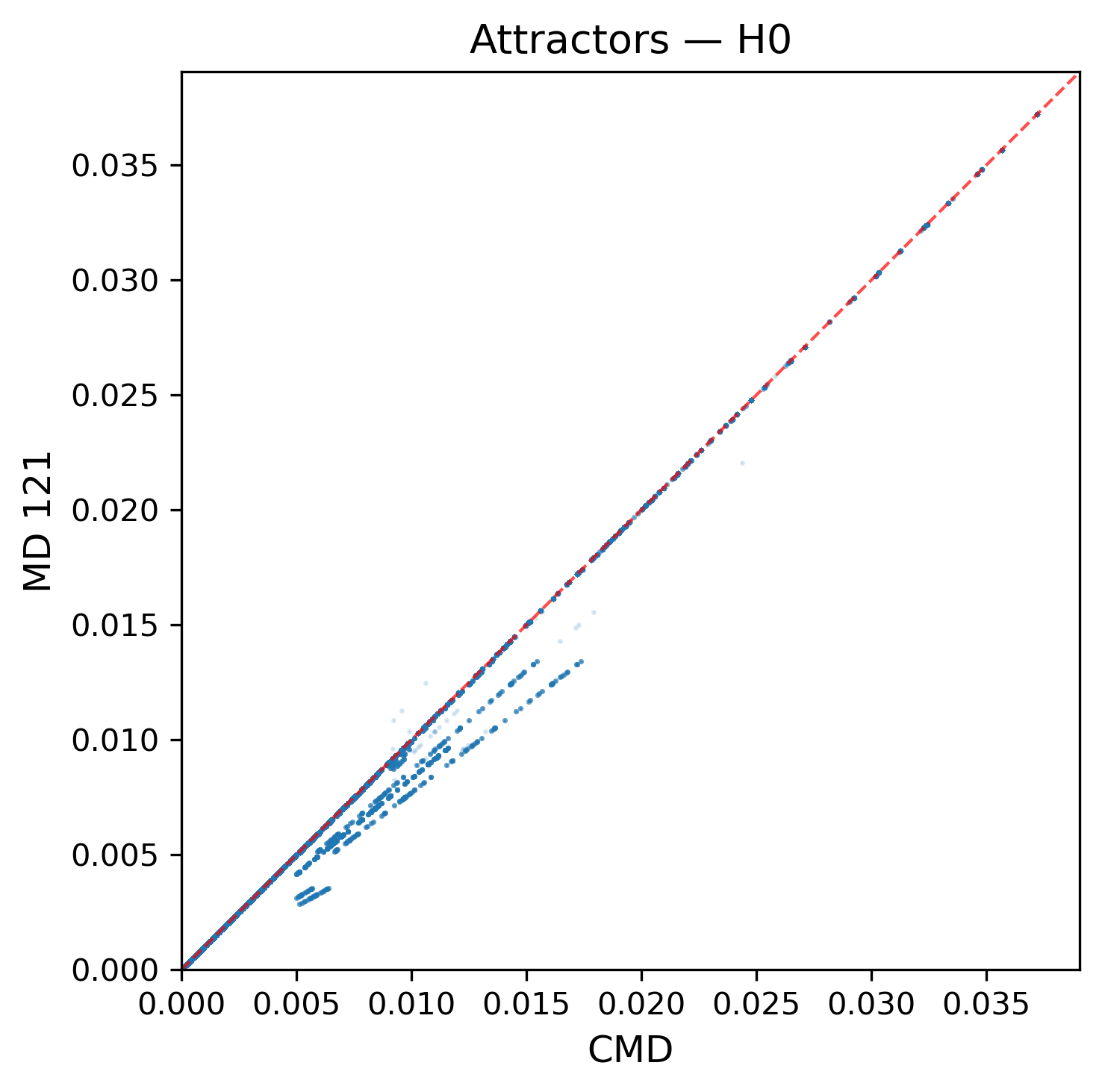}
        \\ (a) Attractors: CMD vs MD~121 for $H_0$.
    \end{minipage}
    \hfill
    \begin{minipage}[b]{0.24\textwidth}
        \centering
        \includegraphics[width=\textwidth]{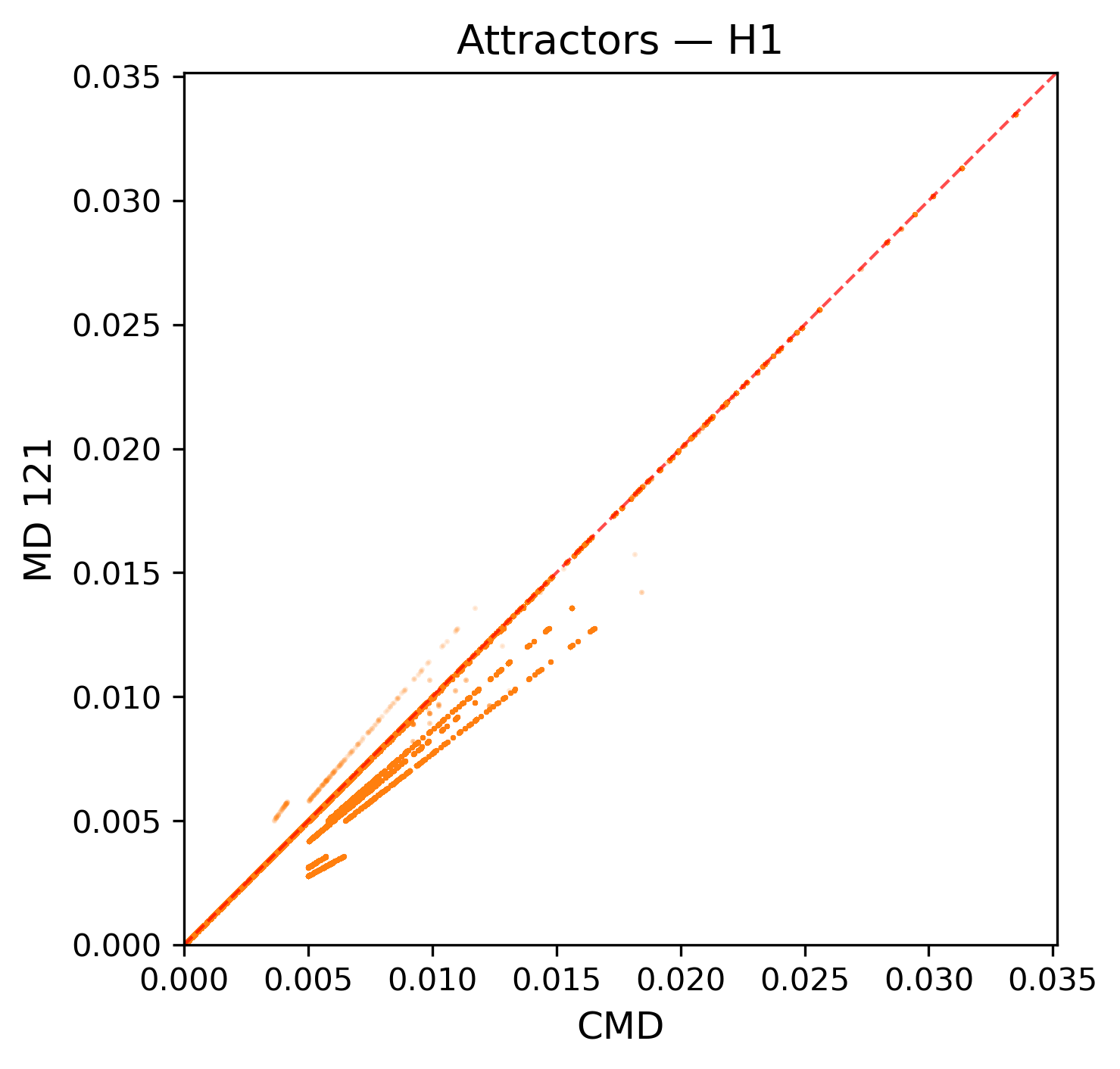}
        \\ (b) Attractors: CMD vs MD~121 for $H_1$.
    \end{minipage}
    \hfill
    \begin{minipage}[b]{0.24\textwidth}
        \centering
        \includegraphics[width=\textwidth]{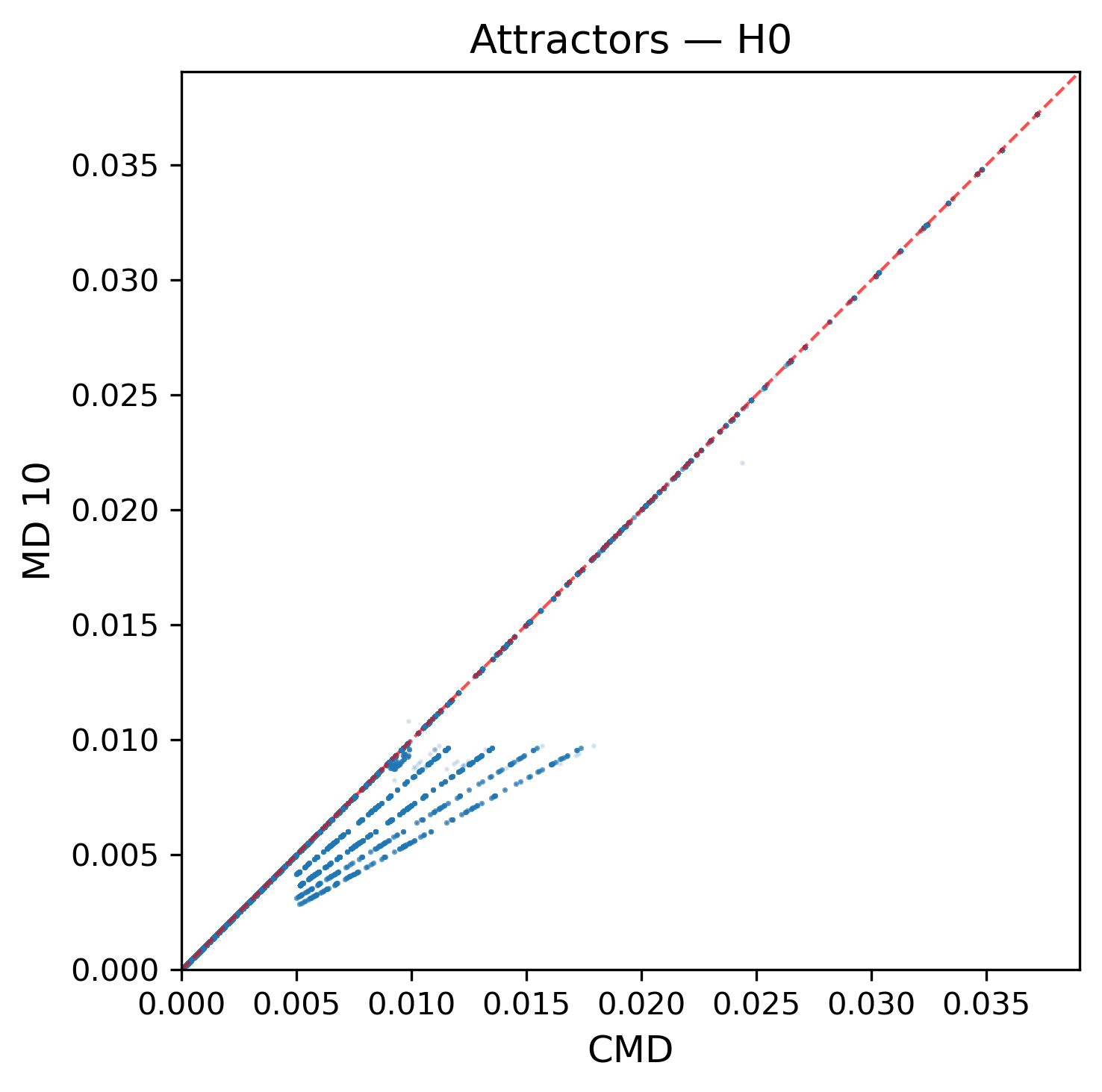}
        \\ (c) Attractors: CMD vs MD~10 for $H_0$.
    \end{minipage}
    \hfill
    \begin{minipage}[b]{0.24\textwidth}
        \centering
        \includegraphics[width=\textwidth]{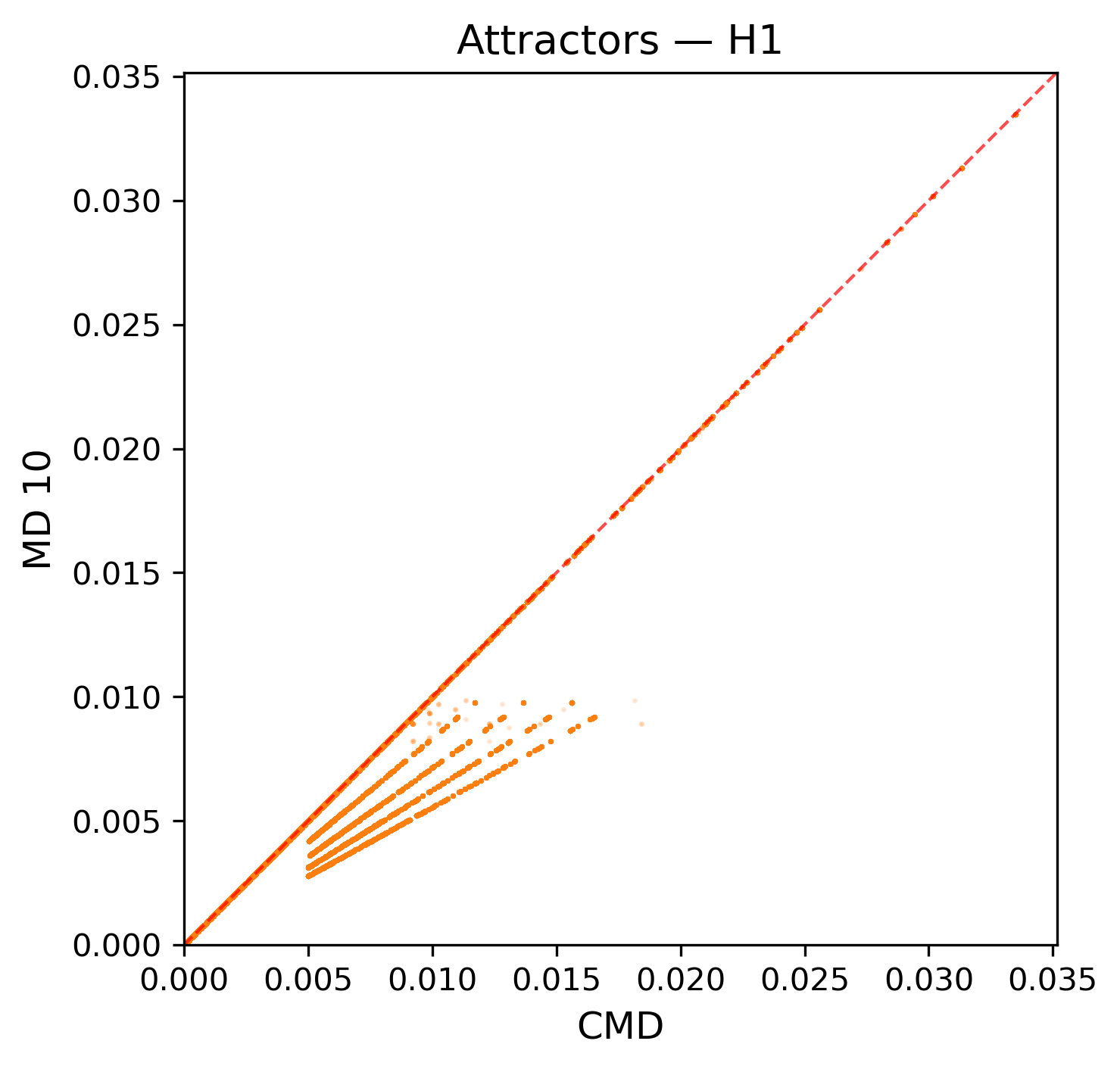}
        \\ (d) Attractors: CMD vs MD~10 for $H_1$.
    \end{minipage}

    \caption{Distance comparison for the attractors dataset. (a,b) CMD vs MD~121 for $H_0$ and $H_1$. (c,d) CMD vs MD~10 for $H_0$ and $H_1$. Red dashed line shows $y=x$.}
    \label{fig:attractors_distances}
\end{figure*}

\subsection{Computational cost}
\label{sec:timing}

We evaluate execution time on a subset of MNIST (150 images). The cost of computing the distance is dominated by persistence diagram computations and bottleneck distance evaluations. Table~\ref{tab:timing} reports timings for the three approximation strategies.

\begin{table}[ht]
\centering
\caption{Wall-clock time (seconds) for distance matrix computation on MNIST subset (150 images).}
\label{tab:timing}
\begin{tabular}{lccc}
\toprule
Method & PD computations & Bottleneck distances & Total \\
\midrule
CMD & 1.3s & 1.5s & 2.8s ($1.0\times$) \\
MD~121 & 13.2s & 12.2s & 25.4s ($9.0\times$) \\
MD~10 & 1.2s & 1.1s & 2.2s ($0.8\times$) \\
\bottomrule
\end{tabular}
\end{table}

The observed speedups of MD~121 and MD~10 over CMD is lower than the theoretical ratio due to differences in persistence diagram sizes.
The matching distance parametrisation produces on average smaller and sparser diagrams, especially for $a, b$ close to $0$ and $1$.
This reduces the cost per bottleneck distance computation. Nevertheless, CMD achieves nearly identical execution time to MD~10 while providing substantially better approximation quality.

\section{Conclusions}

In this work, we introduced the convex matching distance, 
a new pseudo-metric
to compare functions with values in $\R^2$.
If $\bo\p$ and $\bo\s$ are two such functions, the convex matching distance is
defined by maximising the bottleneck distance between the persistence diagrams of the convex combinations $\bo\p^t$ and $\bo\s^t$ of the coordinates of $\bo\p$ and $\bo\s$.
Whereas the classical matching distance requires the computation of a two-parameter family of persistence diagrams (see \cite{CeDFFeFrLa13, Landi2022, FrMoQuTo25}), our metric depends only on the one-parameter family indexed by $t\in[0,1]$.
This reduction in dimensionality, combined with the additional regularity of the operator sending $\bo\p$ to $\bo\p^t$, leads to both conceptual simplifications and computational advantages improving the effective computability of the metric.
Moreover, the smoothness preservation property of the proposed operator
allows for its incorporation into learning-based frameworks, ensuring stable and well-behaved gradients and enabling the use of standard optimization and training procedures.


Under generic regularity assumptions on the input functions,
we show in Theorem \ref{thm:main} that the convex matching distance is realised at some parameter in a distinguished subset of $[0,1]$, that we called special set.
We conjecture that this subset is finite under generic conditions on the input pair $(\bo\p,\bo\s)$.
It is not hard to find examples of pairs of functions having an infinite special set.
For example, this happens when $\mathrm{Ctr}(\bo \p, \bo \psi)$ contains two contours such that one is the translated of the other, or when a contour has zero curvature.
However, in our opinion, these phenomena occur when the pair of functions is not generic, as the functions can be infinitesimally modified locally, so not to have this properties any more.

We validated the practical effectiveness of the convex matching distance through experiments on MNIST, synthetic 3D shapes under varying noise levels, and chaotic dynamical attractors. Our results demonstrate that the convex matching distance achieves accuracy comparable to the full matching distance computed on a fine $11 \times 11$ parameter grid, while requiring approximately $10\times$ fewer persistence diagram computations. Moreover, the convex matching distance substantially outperforms coarser approximations of the matching distance, confirming that the convex parametrization effectively captures the discriminative information of the bifiltration.

Future work will focus on the effective computation of the parameters belonging to this special set as well as on the identification of the proper genericity assumptions under which such a set is finite.
Moreover, we aim to weaken the assumptions under which we proved Theorem \ref{thm:main} and to prove their genericity.

Finally, as mentioned in the introduction the operator that given a parameter $t$ maps each function $\boldsymbol{\varphi}$ to the function $\boldsymbol{\varphi}^{t}$ is just one of the infinitely many possibilities for assigning a parametric family of functions to $\bo \p$. In the near future, we intend to investigate alternative operators and evaluate the advantages and drawbacks associated with their adoption.






\subsection*{Acknowledgements}
F.C. was funded by Inria.
P.F. was supported by GNSAGA-INdAM, CNIT National Laboratory WiLab, and the WiLab-Huawei Joint Innovation Center.
U.F. acknowledges the Italian National Biodiversity Future Center (NBFC) - National Recovery and Resilience Plan (NRRP) funded by the EU-NextGenerationEU (project code CN 00000033) and the Innovation Ecosystem Robotics and AI for Socio-economic Empowerment (RAISE) - National Recovery and Resilience Plan (NRRP) funded by the EU-NextGenerationEU (project code ECS 00000035).
E.M.G. was supported by GNAMPA-INdAM and GNAMPA Project CUP E53C22001930001.
N.Q. was supported by CNIT National Laboratory WiLab, and the WiLab-Huawei Joint Innovation Center.
S.S. acknowledges the GNSAGA-INdAM and the MUR Excellence Department Project MatMod@TOV awarded to the Department of Mathematics, University of Rome Tor Vergata, CUP E83C23000330006.
F.T. was funded by the Knut and Alice Wallenberg Foundations and the WASP Postdoctoral Scholarship Program.

\bibliographystyle{plainurl}
\bibliography{references}

\end{document}